\newcommand{\R}{\mathbb{R}}
\newcommand{\Q}{\mathscr{Q}}
\newcommand{\N}{\mathbb{N}}
\newcommand{\Z}{\mathbb{Z}}
\newcommand{\eps}{\varepsilon}
\newcommand{\ud}[0]{\,\mathrm{d}}
\newcommand{\A}{\mathscr{A}}
\newcommand{\D}{\mathscr{D}}
\newtheorem{theorem}[equation]{Theorem}
\newtheorem*{theorem*}{Theorem}
\newtheorem{lemma}[equation]{Lemma}
\newtheorem{corollary}[equation]{Corollary}
\newtheorem{proposition}[equation]{Proposition}
\theoremstyle{definition}
\newtheorem{defin}[equation]{Definition}
\newtheorem*{defin*}{Definition}
\newtheorem{example}[equation]{Example}
\newtheorem{remark}[equation]{Remark}
\newtheorem*{remark*}{Remark}
\newcommand{\rref}[1]{$\left(\ref{#1}\right)$}
\numberwithin{equation}{section}
\title{Weak $A_\infty$ weights and weak Reverse H\"older property in a space of homogeneous type}
\author{Theresa C. Anderson, Tuomas Hyt\"onen \and Olli Tapiola}
\address{Theresa C. Anderson, Department of Mathematics, Brown University, Providence, RI 02912, USA}
\email{theresa\_anderson@brown.edu}
\address{Tuomas Hytönen, Department of Mathematics and Statistics, P.O.B. 68 (Gustaf H\"allstr\"omin katu 2b), FI-00014 University of Helsinki, Finland}
\email{tuomas.hytonen@helsinki.fi}
\address{Olli Tapiola, Department of Mathematics and Statistics, P.O.B. 68 (Gustaf H\"allstr\"omin katu 2b), FI-00014 University of Helsinki, Finland}
\email{olli.tapiola@helsinki.fi}
\date{November 24, 2014}
\keywords{reverse H\"older inequalities, weak weight classes, dyadic cube, Gehring's lemma}
\subjclass[2010]{30L99 (Primary); 42B25 (Secondary)}
\thanks{T.A. is supported by an NSF graduate student fellowship.}
\thanks{T.H. and O.T. are supported by the European Union through the ERC Starting Grant 278558 ``Analytic-probabilistic methods for borderline singular integrals''.
They are also part of Finnish Centre of Excellence in Analysis and Dynamics Research.}
\begin{document}

\begin{abstract}
  In the Euclidean setting, the Fujii-Wilson-type $A_\infty$ weights satisfy a Reverse Hölder Inequality (RHI) but in spaces of homogeneous 
  type the best known result has been that $A_\infty$ weights satisfy only a weak Reverse Hölder Inequality. In this paper, we compliment the results 
  of Hytönen, Pérez and Rela and show that there exist both $A_\infty$ weights that do not satisfy an 
  RHI and a genuinely weaker weight class that still satisfies a weak RHI. We also show that all the weights that satisfy a 
  weak RHI have a self-improving property but the self-improving property of the strong Reverse Hölder weights fails in a 
  general space of homogeneous type. We prove most of these purely non-dyadic results using convenient 
  dyadic systems and techniques.
\end{abstract}

\maketitle

\section{Introduction}

The relationship between the $A_\infty$ class and the Reverse Hölder Inequality (RHI) is well-known in the Euclidean setting: $w \in A_\infty$ if and only if
$w$ satisfies an RHI for some exponent $q > 1$ \cite{stein, hytonenperez, hytonenperezrela}. In a more general setting the 
results have not been as satisfactory. The following \emph{weak Reverse Hölder Inequality} has been the best result of the previous
type for Fujii-Wilson-type $A_\infty$ weights (for definition, see Section \ref{subsection:weight_classes}) in an arbitrary space of homogeneous type $(X,\rho,\mu)$:
\begin{theorem*}[{\cite[Theorem 1.1]{hytonenperezrela}}]
  For any $w \in A_\infty$ we have
  \begin{align*}
    \left( \fint_B w^{r(w)} \, d\mu \right)^{1/r(w)} \le C_X \fint_{2\kappa B} w \, d\mu
  \end{align*}
  for any ball $B$, where $\kappa$ is the quasi-triangular constant of $\rho$, $r(w) \coloneqq 1 + \frac{c_X}{[w]_{A_\infty}}$ and constants $c_X$ and $C_X$ depend only on $\kappa$
  and the doubling constant of $\mu$.
\end{theorem*}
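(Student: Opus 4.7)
My plan is to reduce the weak Reverse H\"older Inequality on the arbitrary ball $B$ to a genuinely dyadic reverse H\"older estimate on a sandwiching cube, and then to establish the latter via a principal-cube (stopping-time) argument fed by the Fujii--Wilson formulation of $[w]_{A_\infty}$. For the reduction I would invoke an adjacent system of dyadic cubes $\D$ of Hyt\"onen--Kairema type and select a cube $Q \in \D$ with $B \subset Q \subset 2\kappa B$ and $\mu(Q) \asymp \mu(B)$. Once this is done, the doubling property makes the three normalizing measures $\mu(B)$, $\mu(Q)$ and $\mu(2\kappa B)$ mutually comparable, so it is enough to establish the dyadic estimate
\begin{equation*}
  \left(\fint_Q w^{1+\delta}\,d\mu\right)^{1/(1+\delta)} \le C_X \fint_Q w\,d\mu, \qquad \delta = \frac{c_X}{[w]_{A_\infty}};
\end{equation*}
passing back to $B$ on the left and $2\kappa B$ on the right then only costs doubling constants.

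The dyadic estimate itself would be proved by a Calder\'on--Zygmund principal-cube construction on $w$ inside $Q$. Fix a parameter $a > 1$, to be chosen in terms of $[w]_{A_\infty}$. Set $\mathcal{S}_0 := \{Q\}$, and recursively let $\mathcal{S}_{k+1}$ be the collection of maximal dyadic subcubes $P$ of cubes $Q' \in \mathcal{S}_k$ with $\fint_P w > a\fint_{Q'} w$; the level sets $E_k := \bigcup_{P \in \mathcal{S}_k} P$ then decrease in $k$. By maximality and the Lebesgue differentiation theorem, $w \le C_X a^{k+1}\fint_Q w$ almost everywhere on $E_k \setminus E_{k+1}$, so the usual layer-cake computation yields
\begin{equation*}
  \int_Q w^{1+\delta}\,d\mu \le C_X \left(\fint_Q w\right)^\delta \sum_{k \ge 0} a^{k\delta}\, w(E_k).
\end{equation*}

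The remaining task is to obtain geometric decay of $w(E_k)$ fast enough to absorb the factor $a^{k\delta}$. Each $P \in \mathcal{S}_{k+1}$ with parent $Q' \in \mathcal{S}_k$ is contained in the set $\{\MD(w\chi_{Q'}) > a\fint_{Q'}w\}$, so an appropriate dyadic version of the Fujii--Wilson inequality $\int_{Q'} \MD(w\chi_{Q'})\,d\mu \le C_X [w]_{A_\infty}\,w(Q')$ (obtained by pointwise-comparing $\MD$ with the centered maximal operator via the adjacent-system geometry) together with Chebyshev gives $\mu(E_{k+1} \cap Q') \le (C_X[w]_{A_\infty}/a)\,\mu(Q')$; a quantitative form of the $A_\infty$ equivalence between $\mu$- and $w$-smallness then upgrades this to a $w$-measure contraction $w(E_{k+1}) \le \theta\, w(E_k)$ with $\theta \in (0,1)$, provided $a$ is chosen as a sufficiently large multiple of $[w]_{A_\infty}$. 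Iterating gives $w(E_k) \le \theta^k w(Q)$, and taking $\delta = c_X/[w]_{A_\infty}$ small enough makes $a^\delta \theta < 1$; the series above then converges to a universal multiple of $w(Q)$, yielding the dyadic reverse H\"older inequality. The principal obstacle is precisely this comparison chain $\MD \leftrightarrow M \leftrightarrow [w]_{A_\infty}$: in the Euclidean setting the relevant pointwise and integral comparisons are routine, but in a space of homogeneous type dyadic cubes are not balls, and one must lean on the precise geometric properties of Hyt\"onen--Kairema cubes while carefully tracking constants so that the final $\delta$ depends only on the doubling constant, on $\kappa$, and on $[w]_{A_\infty}$. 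A secondary delicate point, which is the ultimate source of the $2\kappa B$ on the right-hand side, is the dyadic sandwich in the first step.
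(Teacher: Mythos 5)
Your reduction to a dyadic reverse H\"older estimate via a Hyt\"onen--Kairema adjacent system, with a stopping-time decomposition on the resulting cube, matches the general architecture of this paper's Section~\ref{section:A_infty_wRHI} (the paper proves the stronger weak-$A_\infty$ version there; the stated theorem is imported from \cite{hytonenperezrela}, where the decomposition runs directly on balls, but the spirit is the same). The gap is in how you close the iteration.

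The step ``a quantitative form of the $A_\infty$ equivalence between $\mu$- and $w$-smallness then upgrades this to a $w$-measure contraction $w(E_{k+1}) \le \theta\, w(E_k)$'' is not available as a black box. What you are invoking is precisely the condition $w\in\A_\infty$ of Definition~\ref{definition:old_A_infty_class}, i.e.\ $w(E)/w(Q')\lesssim(\mu(E)/\mu(Q'))^{1/p}$. The Fujii--Wilson hypothesis $\int_{Q'}M(w\chi_{Q'})\,d\mu\le[w]_{A_\infty}w(Q')$ does not hand you this directly: the only estimates it yields on a stopping set $E\subset Q'$ are $w(E)\le\int_E M(w\chi_{Q'})\,d\mu\le[w]_{A_\infty}w(Q')$ and, via the stopping structure, $w(E)\lesssim a\,w_{Q'}\mu(E)\lesssim[w]_{A_\infty}w(Q')$ --- both bounds exceed $1$ and give no contraction in the $w$-measure. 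In this paper the implication ``Fujii--Wilson $A_\infty$ $\Rightarrow$ $\A_\infty$'' is itself established only in the weak form and only \emph{through} the weak RHI (Theorem~\ref{thm:class_characterization} followed by Lemma~\ref{lemma:same_classes}); indeed for the strong classes the implication is false in a general space of homogeneous type (Remark~\ref{remark:strong_classes} and Section~\ref{subsection:failure_of_RHI}). So your argument is circular at exactly the place it needs the key input.

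There is also a quantitative problem even if one grants the $w$-contraction with a universal $\theta\in(0,1)$: with $a\sim[w]_{A_\infty}$, the series $\sum_k a^{k\delta}\theta^k$ converges only for $\delta\lesssim 1/\log[w]_{A_\infty}$, not for $\delta\sim 1/[w]_{A_\infty}$ as the theorem asserts; geometric decay of $w(E_k)$ is too coarse to recover the sharp exponent. The standard fix --- the one used both in \cite{hytonenperezrela} and in this paper's Lemma~\ref{sharp_lemma} --- is to avoid level-set decay for $w$ altogether. One writes the layer-cake integral for $\int(M_{Q_0}w)^{1+\varepsilon}$, decomposes $\Omega_\lambda=\{M_{Q_0}w>\lambda\}$ into stopping cubes $Q_j$, and bounds $\int_{\Omega_\lambda}M_{Q_0}w\,d\mu$ \emph{at each fixed level} by $\sum_j\int_{Q_j}M_{Q_0}w\,d\mu\lesssim\sum_j\int M_{Q_j}w\,d\mu\le[w]_{A_\infty}\sum_j w(Q_j^*)\lesssim[w]_{A_\infty}\,\lambda\sum_j\mu(Q_j)\lesssim[w]_{A_\infty}\,\lambda\,\mu(\Omega_\lambda)$, where the middle steps use that the generalized dyadic parent $Q_j^*$ strictly contains $Q_j$ and hence has average $\le\lambda$. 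Integrating in $\lambda$ then produces a term $\varepsilon\cdot C\,[w]_{A_\infty}\int(M_{Q_0}w)^{1+\varepsilon}\,d\mu$ which can be absorbed into the left side precisely when $\varepsilon\lesssim 1/[w]_{A_\infty}$. This direct absorption both avoids the circularity and yields the stated sharp $r(w)$.

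Your sandwiching step also deserves a caveat: the Hyt\"onen--Kairema cube $Q_B$ with $B\subseteq Q_B$ is contained in $\sigma B$ for a structural $\sigma$ depending on $\kappa$, $\delta$ and the cube constants, not automatically in $2\kappa B$; to land exactly on $2\kappa B$ either requires the ball-based argument of \cite{hytonenperezrela} or an appeal to the comparability of the dilated averages (Proposition~\ref{prop:A_infty-constants}, Lemma~\ref{lemma:dyadic_RH_q_constants}), which at most changes $C_X$.
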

The previous estimate is called a weak inequality because of the dilation of the ball on the right hand side. 
Although there are no dilations of balls involved in the definition of the $A_\infty$ class, the theorem holds 
only in the weak form even in a purely metric setting where $\kappa = 1$. This leads to two natural questions:
\begin{enumerate}
  \item[1)] Does a strong RHI hold for $A_\infty$ weights in an arbitrary space of homogeneous type?
  \item[2)] Does a weak RHI hold for some weight class that is genuinely weaker than the $A_\infty$ class?
\end{enumerate}

In this paper we answer these questions and explore some further questions related to question 2. In 
Sections \ref{section:weak_weight_classes}, \ref{section:dyadic_weight_classes} and \ref{section:A_infty_wRHI} we introduce the class 
of Fujii-Wilson-type weak $A_\infty$ weights and show that these weak weights satisfy a weak RHI. 
In Section \ref{section:gehring_lemma} we show that all the classes of weak Reverse Hölder weights have a self-improving property, and in Section \ref{section:failure_gehring},
we construct counterexamples that show that both a strong Reverse Hölder Inequality for $A_\infty$ weights and a self-improving property for strong Reverse Hölder weights fail 
in general spaces of homogeneous type.

In \cite{hytonenperezrela}, the authors proved their weak RHI theorem using a Calderón-Zygmund decomposition and working directly with
balls and their dilations. Although similar arguments would be valid in our situation as well, we deliberately take a different 
approach. We actively use the results of the second author and A. Kairema \cite{hytonenkairema}
and take several adjacent Christ-type dyadic systems \cite{christ} to give our weight class an alternative characterization. 
This characterization allows us to follow the elegant proof of the Euclidean ``$A_\infty \Rightarrow \text{RHI}$'' theorem of \cite{hytonenperezrela}.
Similarly, we give alternative ``dyadic'' characterizations to the different classes of weak Reverse Hölder weights and use dyadic arguments to prove 
their self-improving property.

The main reason for taking the dyadic approach is that it allows us to use dyadic cubes in all our decomposition arguments instead of finding a suitable 
Caldéron-Zygmund decomposition for each proof. This way our proofs become both shorter and more straightforward. Although we use several adjacent dyadic 
systems instead of a single one, the Euclidean dyadic techniques still translate well to our setting; basically, we only need to take some additional 
localization arguments into consideration. Also, this way we can prove the self-improving property of the weak Reverse Hölder weight classes as an application of 
the weak Reverse Hölder property of the weak $A_\infty$ weights.

Different types of weak Reverse Hölder Inequalities have an important role in the theory of partial differential 
equations and they appear in the literature frequently (see e.g. \cite{stredulinsky, heinonenkilpelainenmartio, kinnunen}). 
Weak $A_\infty$ weights, on the other hand, are much less common but they have been used in some classical and recent articles
related to analysis in $\R^n$. For example, in the early 1980's, E. Sawyer used these weights to give a sufficient condition 
on weights for a certain two-weight norm inequality to hold \cite{sawyer}, and more recently, S. Hofmann and J. M. Martell have used a 
weak $A_\infty$ condition to characterize certain metric properties of the harmonic measure \cite{hofmannmartell}. Both in \cite{sawyer} and \cite{hofmannmartell},
the weak $A_\infty$ condition is different than the condition we primarily use in this paper, but in Section \ref{section:equivalent_definitions} 
we show that these two definitions agree in spaces of homogeneous type.

We conclude the introduction by noting that our results already turned out to be useful in a recent paper by K. Li \cite{li}.

\section{Set-up and notation}
Let $(X,\rho,\mu)$ be a space of homogeneous type in the sense of Coifman and Weiss, i.e, $(X,\rho)$ is a quasi-metric space equipped with a doubling Borel measure $\mu$.
That is, $\rho$ satisfies the axioms of a metric except for the triangle inequality, which holds in the weaker form
\begin{align*}
  \rho(x,y) \le \kappa(\rho(x,z) + \rho(z,y))
\end{align*}
for some $\kappa \ge 1$, and there exists a constant $D \coloneqq D_\mu$ such that
\begin{align*}
  \mu(B(x,2r)) \le D \mu(B(x,r))
\end{align*}
for every ball $B(x,r) \coloneqq \{y \in X \colon \rho(x,y) < r\}$. The inequality above makes sense if balls are Borel sets which is not always true in the 
quasimetric case. Thus, for simplicity, we will assume that all balls are Borel sets but we note that, up to changing constants throughout, it is possible to eliminate 
this assumption by \cite[Section 5]{auscherhytonen} (see also the classical results of R. Macías and C. Segovia related to this topic \cite{maciassegovia}).
As usual, the dilation of a ball $B \coloneqq B(x,r)$ will be denoted by $\lambda B \coloneqq B(x,\lambda r)$ for 
every $\lambda > 0$.

We do not track the dependencies of our bounds on the structural constants (i.e. the constants depending only on $\kappa$ and $D$). The reason for this is simply that 
in many proofs the structural constants become rather complicated. Thus, for clarity, we use often the notation $E \lesssim F$ if $E \le cF$ for some structural constant $c$
and $E \lesssim_\alpha F$ if $E \le cF$ for some constant $c$ depending on $D$, $\kappa$ and $\alpha$. If $E \lesssim F \lesssim E$, we denote $E \eqsim F$.

The doubling property of $\mu$ implies the following \emph{geometrical doubling property} of $\rho$: any ball $B(x,r)$ can be covered by at most $N \coloneqq N_{D,\kappa}$,
balls of radius $r/2$ (it is not difficult to show that $N \le D^{6 + 3\log_2 \kappa}$).
Furthermore, for any $\varepsilon \in (0,1]$, the ball $B(x,r)$ can be covered by at most $N_\varepsilon \coloneqq N \varepsilon^{-\log_2 N}$ 
balls of radius $\varepsilon r$.

For every $\mu$-measurable set $E$ and for every measurable function $f$ we denote
\begin{align*}
  f(E) \coloneqq \int_E f \, d\mu \ \ \ \text{ and } \ \ \ f_E \coloneqq \fint_E f \, d\mu = \frac{1}{\mu(E)} \int_E f \, d\mu.
\end{align*}
Almost all of the functions in our paper will be weights, i.e. nonnegative measurable functions.

\subsection{$A_\infty$ and $RH_q$ classes}
\label{subsection:weight_classes}

In the Euclidean space $\R^n$, there are several equivalent ways to define the $A_\infty$ class \cite{stein,hruscev}. In general, the equivalence breaks down in an 
arbitrary space of homogeneous type \cite{strombergtorchinsky,kortekansanen}. The starting point of our investigation is the following definition used by the second 
author, Pérez and Rela in a space of homogeneous type \cite{hytonenperezrela} and extending the earlier Euclidean notion by Fujii \cite{fujii} and
Wilson \cite{wilson}: a weight $w$ belongs to the (strong) $A_\infty$ class if
\begin{align*}
  [w]_{A_\infty} \coloneqq \sup_B \frac{1}{w(B)} \int_B M(1_B w) \, d\mu < \infty,
\end{align*}
where the supremum is taken over all the balls $B$ in $X$ and $M$ is the non-centered Hardy-Littlewood maximal operator
\begin{align*}
  Mf(x) \coloneqq \sup_{B \ni x} \fint_B |f| \, d\mu.
\end{align*}

For every $q \in (1,\infty)$ we say that a weight $w$ belongs to the 
(strong) $q$-Reverse Hölder class $RH_q$ if there exists a finite constant $[w]_{RH_q}$ such that
\begin{align*}
  \left( \fint_B w^q \, d\mu \right)^{1/q} \le [w]_{RH_q} \fint_B w \, d\mu
\end{align*}
for every ball $B$. Here $[w]_{RH_q}$ denotes the smallest of the constants that satisfy the condition above.

\section{Weak $A_\infty$ and weak Reverse Hölder classes}
\label{section:weak_weight_classes}

Our goal in the next sections is to prove that we can weaken the $A_\infty$ class in such a way that the functions of the new, strictly bigger class still satisfy a 
weak RHI. For this we also define the class of weak Reverse Hölder weights.

\subsection{Weak $A_\infty$}

Since dilations of balls are present in the weak Reverse Hölder Inequalities, the following definition is natural:

\begin{defin}
  \label{definition:sigma-weak-A-infty}
  For every $\sigma \ge 1$ we say that a weight $w$ belongs to the \emph{$\sigma$-weak $A_\infty$ class} $A_\infty^\sigma$ if 
  \begin{align*}
    [w]_\infty^\sigma \coloneqq \sup_B \frac{1}{w(\sigma B)} \int_B M(1_B w) \, d\mu < \infty
  \end{align*}
  where the supremum is taken over all the balls $B \subseteq X$.
\end{defin}

This definition genuinely weakens the $A_\infty$ class. We can see this by a simple example.

\begin{example}
  Consider the weight $w$, $w(x) = e^x$, in $(\R,dx)$. Then for every interval $I = (a-r,a+r)$ we have
  \begin{align*}
    \int_I M(1_I w) \, dx \le |I|e^{a+r} = \frac{2}{\sigma-1} \cdot (\sigma-1)r e^{a+r} \le \frac{2}{\sigma-1} \int_{a+r}^{a+\sigma r} e^x \, dx \le \frac{2}{\sigma-1} w(\sigma I).
  \end{align*}
  In particular, $w \in A_\infty^\sigma$ for every $\sigma > 1$.
  
  On the other hand, for every $k \in \N$ and $I_k \coloneqq (k,3k)$ we have
  \begin{align*}
    \frac{w(2I_k)}{w(I_k)} = \frac{e^{4k}-1}{e^{3k} - e^k} \ge e^k \xlongrightarrow{k \to \infty} \infty. 
  \end{align*}
  Thus, the measure $w \, dx$ is not doubling so $w \notin A_\infty$.
\end{example}

Although the constants $[w]_\infty^\sigma$ depend on the parameter $\sigma$, the classes $A_\infty^\sigma$ contain 
the same functions if $\sigma > \kappa$:

\begin{theorem}
  \label{thm:A-infty-classes} $A_\infty^\sigma = A_\infty^{\sigma'}$ for every $\sigma,\sigma' > \kappa$. 
\end{theorem}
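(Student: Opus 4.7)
The inclusion $A_\infty^\sigma \subseteq A_\infty^{\sigma'}$ whenever $\sigma \le \sigma'$ is immediate from the monotonicity $w(\sigma B) \le w(\sigma' B)$, which gives $[w]_\infty^{\sigma'} \le [w]_\infty^{\sigma}$. The real content is to pass from a larger dilation parameter $\sigma'$ back down to a smaller $\sigma > \kappa$, so I will assume $\kappa < \sigma < \sigma'$.

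Fix a ball $B = B(x_0,r)$. My plan is to cover $B$ by a controlled number of small balls $B_i = B(y_i, \varepsilon r)$ with $y_i \in B$, where $\varepsilon > 0$ is chosen small enough that the iterated dilation $(\sigma')^2 B_i$ already fits inside $\sigma B$ for every $i$. The hypothesis $\sigma > \kappa$ is exactly what makes this possible: the quasi-triangle inequality places $(\sigma')^2 B_i$ inside $B(x_0, \kappa((\sigma')^2 \varepsilon r + r))$, which is contained in $\sigma B$ as soon as $\varepsilon \le (\sigma/\kappa - 1)/(\sigma')^2$. The geometric doubling property then bounds the number of $B_i$ needed by some $N_\varepsilon$ depending only on $D$, $\kappa$, $\sigma$ and $\sigma'$.

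For each $i$ I would estimate $M(1_B w)$ pointwise on $B_i$ by splitting the defining supremum at the scale of $B_i$. When a competing ball $B'' \ni x$ has radius $r'' < c_0 \varepsilon r$ for a suitable $c_0$ depending on $\kappa,\sigma'$, the quasi-triangle inequality forces $B'' \subseteq \sigma' B_i$, and so the small-scale part of the supremum is bounded pointwise by $M(1_{\sigma' B_i} w)(x)$. For radii $r'' \ge c_0 \varepsilon r$, doubling yields $\mu(B'') \gtrsim \mu(B_i)$ and the crude bound $\fint_{B''} 1_B w \le w(B)/\mu(B'') \lesssim w(B)/\mu(B_i)$. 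Integrating over $B_i$, applying the hypothesis $w \in A_\infty^{\sigma'}$ to the ball $\sigma' B_i$, and using the containment $(\sigma')^2 B_i \subseteq \sigma B$ should give
\begin{align*}
  \int_{B_i} M(1_B w)\, d\mu \lesssim [w]_\infty^{\sigma'}\, w((\sigma')^2 B_i) + w(B) \lesssim \bigl([w]_\infty^{\sigma'} + 1\bigr)\, w(\sigma B),
\end{align*}
and summing the $N_\varepsilon$ such estimates yields a quantitative bound of the form $[w]_\infty^\sigma \lesssim_{\sigma,\sigma'} [w]_\infty^{\sigma'} + 1$.

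The one place where I would have to be careful is the bookkeeping of the quasi-metric constants, so that the crucial containment $(\sigma')^2 B_i \subseteq \sigma B$ really does use only the standing assumption $\sigma > \kappa$ and not something stronger like $\sigma > \kappa\sigma'$. The introduction advertises a dyadic approach via adjacent Christ-type systems, and those are presumably unavoidable for the more quantitative results later in the paper; but for this qualitative equivalence the direct covering argument above looks self-contained and does not seem to require that machinery.
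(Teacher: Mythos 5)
Your proof is correct and follows essentially the same strategy as the paper's: cover $B$ by $N_\varepsilon$ small balls $B_i$ centered in $B$ with $\varepsilon$ chosen small enough (using precisely $\sigma>\kappa$) that a dilate of $B_i$ large enough to absorb the local part of $M(1_Bw)$ still lands inside $\sigma B$, handle the coarse-scale part by the crude bound $w(B)/\mu(B_i)$, and sum over the $N_\varepsilon$ pieces. The paper realizes the local/coarse split by decomposing $1_B=1_{2\kappa B_i}+1_{B\setminus 2\kappa B_i}$ and applying the hypothesis to $2\kappa B_i$ whereas you split the supremum in the maximal function by radius and apply it to $\sigma'B_i$, which is the same idea with slightly different numerology; your closing worry is also moot, since the paper proves this theorem by exactly such a direct covering argument in Section 3, with the dyadic systems only introduced afterward.
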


In the proof of Theorem \ref{thm:A-infty-classes} we use the following standard lemma.

\begin{lemma}
  Let $f$ be a locally integrable function. Then for every $x \in X$ it holds that
  \begin{align}
    \label{pointwise_maximal_functions} Mf(x) \lesssim M_cf(x),
  \end{align}
  where
  \begin{align*}
   M_cf(x) = \sup_{r > 0} \fint_{B(x,r)} |f| \, d\mu.
  \end{align*}
\end{lemma}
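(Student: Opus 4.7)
The plan is to show that any non-centered average over a ball containing $x$ can be replaced, up to a structural constant, by a centered average at $x$. Fix $x \in X$ and an arbitrary ball $B = B(y,r)$ with $x \in B$, so that $\rho(x,y) < r$. The first step is a quasi-triangle computation: for any $z \in B(y,r)$ we have $\rho(x,z) \le \kappa(\rho(x,y) + \rho(y,z)) < 2\kappa r$, so $B(y,r) \subseteq B(x, 2\kappa r)$. This lets me replace the integration domain $B$ by the centered ball $B(x,2\kappa r)$ in the numerator.

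Next I need to control the denominator in the opposite direction, i.e.\ bound $\mu(B(x,2\kappa r))$ by a constant times $\mu(B(y,r))$. A symmetric quasi-triangle argument gives $B(x, 2\kappa r) \subseteq B(y, \kappa(1+2\kappa)r)$: if $\rho(x,z) < 2\kappa r$, then $\rho(y,z) \le \kappa(r + 2\kappa r) = \kappa(1+2\kappa) r$. The doubling property of $\mu$, applied a number of times depending only on $\log_2(\kappa(1+2\kappa))$, yields $\mu(B(y,\kappa(1+2\kappa)r)) \lesssim \mu(B(y,r))$, so that $\mu(B(x,2\kappa r)) \lesssim \mu(B(y,r))$.

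Combining the two inclusions, I get
\begin{align*}
  \fint_{B(y,r)} |f| \, d\mu
  \;\le\; \frac{1}{\mu(B(y,r))} \int_{B(x, 2\kappa r)} |f| \, d\mu
  \;=\; \frac{\mu(B(x,2\kappa r))}{\mu(B(y,r))} \fint_{B(x,2\kappa r)} |f| \, d\mu
  \;\lesssim\; M_c f(x).
\end{align*}
Since this bound is uniform in the choice of ball $B \ni x$, taking the supremum over all such $B$ produces $Mf(x) \lesssim M_c f(x)$.

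There is no real obstacle here; the only thing to be careful about is the order of the two containments and the direction in which doubling is applied. All constants absorb into the symbol $\lesssim$ since they depend only on $\kappa$ and $D$.
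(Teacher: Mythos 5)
Your proof is correct and follows essentially the same route as the paper: compare the non-centered ball $B(y,r)$ with the centered ball $B(x,2\kappa r)$ via the quasi-triangle inequality, then invoke doubling to relate the two measures. The only cosmetic difference is that the paper controls $\mu(B(x,2\kappa r))$ by $\mu(B(y,r))$ through the symmetric inclusion $B(x,r) \subseteq B(y,2\kappa r)$ together with doubling at both centers, whereas you use a single inclusion $B(x,2\kappa r) \subseteq B(y,\kappa(1+2\kappa)r)$ and doubling at $y$; both yield the same structural constant up to bookkeeping.
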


\begin{proof}
  Suppose that $x \in B(z,r)$, $r > 0$. Then $B(x,r) \subseteq B(z,2\kappa r)$ and $B(z,r) \subseteq B(x,2\kappa r)$. Thus, by the doubling property of $\mu$, we have
  \begin{align*}
    \frac{1}{\mu(B(z,r))} \int_{B(z,r)} |f| \, d\mu \lesssim \frac{1}{\mu(B(x,2\kappa r))} \int_{B(x,2\kappa r)} |f| \, d\mu \le M_cf(x),
  \end{align*}
  which proves the claim.
\end{proof}

\begin{proof}[Proof of Theorem \ref{thm:A-infty-classes}]
  Let $\sigma' < \sigma$. Clearly $A_\infty^{\sigma'} \subseteq A_\infty^\sigma$ so we only need to show that $A_\infty^{\sigma} \subseteq A_\infty^{\sigma'}$.
  
  Let $w \in A_\infty^{\sigma}$ and let $B := B(x_0,r)$ be any ball, $r > 0$. Then for every $y \in B$, $\varepsilon > 0$ and $z \in B(y,2\sigma\kappa\varepsilon r)$ we have
  \begin{align*}
    \rho(z,x_0) \le \kappa(\rho(z,y) + \rho(y,x_0)) < \kappa(2\sigma\kappa\varepsilon + 1)r.
  \end{align*}
  Thus, since $\sigma,\sigma' > \kappa$, for $\varepsilon := \frac{\sigma' - \kappa}{2\sigma \kappa^2} \in (0,1]$ we have
  $B(y,2\sigma\kappa \varepsilon r) \subseteq \sigma'B$ for every $y \in B$.
  
  By the geometrical doubling property of $\rho$, there is a finite set 
  $\{x_i \colon i = 1,2, \ldots, N_\varepsilon\} \subseteq B$ such that the balls $B_i := B(x_i,\varepsilon r)$ cover the ball $B$. Now
  \begin{align*}
    \int_B M(1_B w) \ &\le \ \sum_{i=1}^{N_\varepsilon} \int_{B_i} M(1_B w) \\ 
                      &\le \ \sum_{i=1}^{N_\varepsilon} \left( \int_{B_i} M(1_{2\kappa B_i} w) + \int_{B_i} M(1_{B \setminus 2\kappa B_i} w) \right) \\
                      &\eqqcolon \ \sum_{i=1}^{N_\varepsilon} \left( I_i + II_i \right).
  \end{align*}
  Notice that
  \begin{align}
    \label{I_i} I_i \le \int_{2\kappa B_i} M(1_{2\kappa B_i}w) \le [w]_\infty^{\sigma} w(2\sigma\kappa B_i) \le [w]_\infty^\sigma w(\sigma'B).
  \end{align}
  Also, we have $B(x,\varepsilon r) \subseteq 2\kappa B_i$ for every $x \in B_i$. Thus, for every $x \in B_i$ there holds
  \begin{align*}
    M(1_{B \setminus 2\kappa B_i}w)(x) \ \overset{\text{\rref{pointwise_maximal_functions}}}{\lesssim} \ M_c(1_{B \setminus 2\kappa B_i}w)(x) \
                                         &= \ \sup_{s \ge \varepsilon r} \frac{1}{\mu(B(x,s))} \int_{B(x,s)} 1_{B \setminus 2\kappa B_i}w \, d\mu \\
                                         &\le \ \frac{1}{\mu(B(x,\varepsilon r))} w(B) \\
                                         &\lesssim_{\varepsilon} \ \frac{1}{\mu(\sigma' B)} w(B).
  \end{align*}
  It follows that
  \begin{align}
    \label{II_i} II_i \lesssim_\varepsilon \frac{\mu(B_i)}{\mu(\sigma' B)} w(B) \overset{B_i \subseteq \sigma'B}{\le} w(\sigma'B).
  \end{align}
  Hence,
  \begin{align}
    \label{w-constants:1}  \int_B M(1_Bw) \overset{\text{\rref{I_i},\rref{II_i}}}{\le} N_\varepsilon( [w]_\infty^{\sigma} + C_{X,\varepsilon}) w(\sigma'B),
  \end{align}
  where the finite constant $N_\varepsilon( [w]_\infty^{\sigma} + C_{X,\varepsilon})$ depends on $w$, $\sigma$, $\sigma'$, $D$ and $\kappa$.
\end{proof}
  
Hence, it does not matter how much we weaken the $A_\infty$ class since the new class will always contain the same functions. Also, the next result shows that it is 
easy to compare the different weak $A_\infty$ constants with each other.

\begin{proposition}
  \label{prop:A_infty-constants}
  If $\sigma > \sigma' > \kappa$, then
  \begin{align*}
    [w]_\infty^\sigma \le [w]_\infty^{\sigma'} \lesssim_{\sigma,\sigma'} [w]_\infty^\sigma.
  \end{align*}
\end{proposition}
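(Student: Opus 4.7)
The first inequality is immediate from set inclusion. Since $\sigma' < \sigma$ and the balls $\sigma' B$ and $\sigma B$ are concentric, we have $\sigma' B \subseteq \sigma B$, hence $w(\sigma' B) \le w(\sigma B)$. Dividing the same numerator $\int_B M(1_B w)\,d\mu$ by the smaller quantity can only enlarge the quotient, so taking the supremum over balls $B$ yields $[w]_\infty^\sigma \le [w]_\infty^{\sigma'}$.

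For the reverse direction my plan is to reread the proof of Theorem~\ref{thm:A-infty-classes} and extract the multiplicative structure already implicit in inequality \eqref{w-constants:1}. That estimate is of the form
\[
  \int_B M(1_B w)\,d\mu \le N_\varepsilon\bigl([w]_\infty^\sigma + C_{X,\varepsilon}\bigr)\,w(\sigma' B),
\]
with $\varepsilon = (\sigma' - \kappa)/(2\sigma\kappa^2)$ and constants $N_\varepsilon$, $C_{X,\varepsilon}$ depending only on $D$, $\kappa$, $\sigma$, $\sigma'$. Dividing by $w(\sigma' B)$ and taking the supremum over balls $B$ delivers the additive bound $[w]_\infty^{\sigma'} \le N_\varepsilon [w]_\infty^\sigma + N_\varepsilon C_{X,\varepsilon}$.

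To promote this to the desired purely multiplicative statement $[w]_\infty^{\sigma'} \lesssim_{\sigma,\sigma'} [w]_\infty^\sigma$, I would establish the universal lower bound $[w]_\infty^\sigma \gtrsim_\sigma 1$ for every weight $w$ that is not $\mu$-a.e.\ zero (the trivial case being vacuous). Note first that taking $B$ itself as the averaging ball in the definition of $M$ gives $M(1_B w)(x) \ge w(B)/\mu(B)$ for every $x \in B$, so $\int_B M(1_B w) \ge w(B)$ and therefore $[w]_\infty^\sigma \ge w(B)/w(\sigma B)$. It now suffices to exhibit one ball $B$ with $w(B)/w(\sigma B) \gtrsim_\sigma 1$. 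Lebesgue differentiation in $(X,\rho,\mu)$, available in every doubling metric measure space, furnishes a point $x_0$ with $w(x_0) > 0$ at which $w(B(x_0,r))/\mu(B(x_0,r)) \to w(x_0)$; for all sufficiently small $r > 0$ both this average and $w(B(x_0,\sigma r))/\mu(B(x_0,\sigma r))$ are comparable to $w(x_0)$, and iterating doubling $\lceil \log_2 \sigma\rceil$ times gives $\mu(B(x_0,\sigma r)) \lesssim_\sigma \mu(B(x_0,r))$. Combining these yields $w(B(x_0,r))/w(B(x_0,\sigma r)) \gtrsim_\sigma 1$, so $[w]_\infty^\sigma \gtrsim_\sigma 1$, and the additive constant $N_\varepsilon C_{X,\varepsilon}$ can then be absorbed into $[w]_\infty^\sigma$.

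The only substantive step beyond bookkeeping is the absorption of the additive constant, and the main obstacle is recognizing that $[w]_\infty^\sigma$ admits a structural positive lower bound for any nontrivial weight; once this is in place the proposition reduces to repackaging the estimates already carried out in Theorem~\ref{thm:A-infty-classes}.
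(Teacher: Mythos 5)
Your proof is correct, and the first inequality is handled exactly as in the paper. For the reverse inequality the overall architecture also matches the paper's: recover the additive estimate $[w]_\infty^{\sigma'}\le N_\eps\bigl([w]_\infty^\sigma+C_{X,\sigma,\sigma'}\bigr)$ from \eqref{w-constants:1}, then show that $[w]_\infty^\sigma$ has a structural lower bound of order $\gtrsim_\sigma 1$, and finally absorb the additive constant. Where you diverge from the paper is in how that lower bound is obtained. You invoke the Lebesgue differentiation theorem to find a Lebesgue point $x_0$ with $w(x_0)>0$, then combine the convergence of averages over $B(x_0,r)$ and $B(x_0,\sigma r)$ with the doubling of $\mu$ to produce a ball on which $w(B)/w(\sigma B)\gtrsim_\sigma 1$. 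The paper instead appeals to Lemma~\ref{lemma:doubling_balls}, a Tolsa-type statement it has already proved, applied to the measure $w\,d\mu$ in place of $\mu$; this directly yields a ball with $w(\sigma B)\le 2\sigma^{\log_2 N}w(B)$ and hence $[w]_\infty^\sigma\ge (2\sigma^{\log_2 N})^{-1}$. Both routes are valid. Your version leans on a more familiar, textbook tool (Lebesgue differentiation, which does hold in quasimetric doubling spaces, so the restriction to ``metric'' in your write-up is an unnecessary caveat), but it genuinely uses the doubling of $\mu$ twice. The paper's route is more self-contained within its own development and has the conceptual advantage that Lemma~\ref{lemma:doubling_balls} requires only the geometric doubling of $\rho$, so the existence of a ``good'' ball for $w\,d\mu$ holds without assuming anything about that measure beyond local finiteness; in the present doubling setting, of course, this extra generality is not needed and the two arguments give the same kind of quantitative conclusion.
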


The proof of the right inequality is based on the following lemma:

\begin{lemma}
  \label{lemma:doubling_balls}
  Let $\sigma > 1$. Then for $\mu$-a.e. $x \in X$ there exist balls $B$ centered at $x$ with arbitrarily small radii such that
  \begin{align}
    \label{doubling_ball_condition} \mu(\sigma B) \le 2\sigma^{\log_2 N} \mu(B).
  \end{align}
\end{lemma}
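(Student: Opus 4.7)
The plan is to argue by contradiction, showing that the set $E$ of ``bad'' points $x \in X$---those for which \eqref{doubling_ball_condition} fails for every sufficiently small radius $r > 0$---has measure zero. The first step is to stratify $E = \bigcup_{n \in \N} E_n$, where
\begin{align*}
  E_n \coloneqq \{x \in X : \mu(\sigma B(x, r)) > 2\sigma^{\log_2 N} \mu(B(x, r)) \text{ for every } 0 < r \le 1/n\},
\end{align*}
and to reduce the claim to showing $\mu(E_n \cap B_0) = 0$ for every ball $B_0 = B(x_0, R)$ and every $n \in \N$.

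The main observation is an iteration. For $x \in E_n$ and $k \ge 1$, applying the defining inequality of $E_n$ successively at the radii $\sigma^{-j}/n$, $j = 1, \ldots, k$, yields
\begin{align*}
  \mu(B(x, \sigma^{-k}/n)) < (2\sigma^{\log_2 N})^{-k} \mu(B(x, 1/n)),
\end{align*}
and for $x \in B_0$ doubling bounds the right-hand factor by $C_{n,R}\, \mu(B_0)$. Next, I would cover $B_0$ via the geometrical doubling property by at most $M \lesssim_{n,R} \sigma^{k \log_2 N}$ balls $B(y_i, \sigma^{-k}/n)$. For each index $i$ with $B(y_i, \sigma^{-k}/n) \cap E_n \ne \emptyset$, I pick a point $x_i$ in the intersection and note $B(y_i, \sigma^{-k}/n) \subseteq B(x_i, 2\kappa \sigma^{-k}/n)$, so one more doubling gives $\mu(B(y_i, \sigma^{-k}/n)) \lesssim \mu(B(x_i, \sigma^{-k}/n))$.

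Summing over $i$ and substituting the iterated bound yields
\begin{align*}
  \mu(E_n \cap B_0) \,\lesssim\, M \cdot (2\sigma^{\log_2 N})^{-k} \mu(B_0) \,\lesssim_{n,R}\, 2^{-k} \mu(B_0),
\end{align*}
and sending $k \to \infty$ forces $\mu(E_n \cap B_0) = 0$, finishing the argument. The crux of the proof---and the one place where the specific constant $2\sigma^{\log_2 N}$ matters---is the exact cancellation between the factor $\sigma^{k \log_2 N}$ coming from the geometrical doubling and the factor $\sigma^{-k \log_2 N}$ coming from the iteration, which leaves only the geometric decay $2^{-k}$. This is the main conceptual hurdle: one must recognise that the Assouad-type exponent $\log_2 N$ is precisely what the covering produces, and that the factor $2$ in the lemma is placed to beat it. A minor technical issue is the measurability of $E_n$ (since its definition involves an uncountable intersection over radii), but this can be handled routinely by exploiting the left-continuity in $r$ of $\mu(B(x,r))$ and $\mu(\sigma B(x,r))$, reducing the intersection to rational radii.
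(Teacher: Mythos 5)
Your proof is correct and follows essentially the same route as the paper's (which is modeled on \cite[Lemma 3.3]{hytonenframework}): iterate the failed doubling inequality $k$ times to get the factor $(2\sigma^{\log_2 N})^{-k}$, cover a reference ball by $\sim\sigma^{k\log_2 N}$ balls at the small scale, and observe that the two exponential factors cancel to leave $2^{-k}$. The paper organises this via a per-ball notion of ``$k$-bad'' points and a maximal separated family, whereas you stratify the global exceptional set by the threshold radius $1/n$ and use a direct covering, but the cancellation mechanism and all estimates are the same.
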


\begin{remark}
  Lemma \ref{lemma:doubling_balls} is a quasimetric generalization of \cite[Lemma 3.3]{hytonenframework} (which in turn is based on earlier results of X. Tolsa \cite{tolsa}) for the choices $\alpha = \sigma$ and $\beta = 2\sigma^{\log_2 N}$. The structure of 
  our proof follows the structure of the original proof.
\end{remark}

\begin{proof}[Proof of Lemma \ref{lemma:doubling_balls}]
  Let us fix a ball $B \coloneqq B(x_0,r)$. For every $x \in B$ and $k \in \N$, denote $B_x^k \coloneqq B(x,\sigma^{-k}r)$. We call the point $x$ \emph{$k$-bad} if none
  of the balls $\sigma^j B_x^k$, $j = 0,1,\ldots,k$, satisfy the condition \rref{doubling_ball_condition}. Since $\sigma^k B_x^k = B(x,r) \subseteq 2\kappa B$, for every 
  $k$-bad point $x$ we have
  \begin{align*}
    \mu(B_x^k) \ \le \ \frac{1}{(2\sigma^{\log_2 N})^k} \mu(\sigma^k B_x^k) 
               \ \le \ \frac{1}{(2\sigma^{\log_2 N})^k} \mu(2\kappa B).
  \end{align*}
  Let $Y$ be a maximal $\sigma^{-k}r$-separated family among the $k$-bad points of $B$. Then it holds that $\{x \in B \colon x \text{ is } k\text{-bad}\} \subseteq \bigcup_{y \in Y} B_y^k$.
  Since the balls $\frac{1}{2\kappa}B_y^k$ are disjoint and their centres are contained in $B$, the geometrical doubling property of $\rho$ implies that $|Y| \le N(2\kappa\sigma^k)^{\log_2 N}$.
  Thus,
  \begin{align*}
    \mu(\{x \in B \colon x \text{ is } k\text{-bad}\}) \ \le \ \sum_{y \in Y} \mu(B_y^k) \ &\le \ \sum_{y \in Y} \frac{1}{(2\sigma^{\log_2 N})^k} \mu(2\kappa B) \\
                                                                                         &\le \ \frac{N(2\kappa\sigma^k)^{\log_2 N}}{(2\sigma^{\log_2 N})^k} \mu(2\kappa B) \\
                                                                                         &= \ N^{2+\log_2 \kappa} \cdot 2^{-k} \mu(2\kappa B) \longrightarrow 0
  \end{align*}
  as $k \to \infty$. Hence, $\mu(\{x \in B \colon x \text{ is } k\text{-bad for every } k \in \N\} = 0$ for any ball $B$.
  
  Since $X = \bigcup_{k=1}^\infty B(x_0,k)$, it follows that $\mu(\{x \in X \colon x \text{ is } k\text{-bad for every } k \in \N\} = 0$, which proves the claim.
\end{proof}

\begin{proof}[Proof of Proposition \ref{prop:A_infty-constants}]
  Since for every ball $B$ it holds that
  \begin{align*}
    \int_B M(1_B w) \, d\mu \ \le \ [w]_\infty^{\sigma'} w(\sigma'B) \ \le \ [w]_\infty^{\sigma'} w(\sigma B),
  \end{align*}
  we know that $[w]_\infty^\sigma \le [w]_\infty^{\sigma'}$.

  By \rref{w-constants:1}, we know that $[w]_\infty^{\sigma'} \le N_\varepsilon( [w]_\infty^{\sigma} + C_{X,\sigma,\sigma'})$ for $\sigma > \sigma' > 1$, 
  $\varepsilon = \frac{\sigma' - \kappa}{2\sigma \kappa^2}$.
  On the other hand, by 
  Lemma \ref{lemma:doubling_balls}, we know that there exists $B$ such that $w(\sigma B) \le 2\sigma^{\log_2 N} w(B)$. Thus,
  \begin{align*}
    \frac{1}{2\sigma^{\log_2 N}} w(\sigma B) \ \le \ w(B) \ \le \ \int_B M(1_B w) \, d\mu.
  \end{align*}
  In particular, $[w]_\infty^{\sigma} \ge 1/(2\sigma^{\log_2 N})$ and
  \begin{align*}
    [w]_\infty^{\sigma'} \ \le \ N_\varepsilon [w]_\infty^\sigma(1 + 2\sigma^{\log_2 N} C_{X,\sigma,\sigma'}) \ \le \ 4N_\varepsilon \sigma^{\log_2 N} C_{X,\sigma,\sigma'} [w]_\infty^\sigma.
  \end{align*}
\end{proof}

Theorem \ref{thm:A-infty-classes} and Proposition \ref{prop:A_infty-constants} give us now a natural way to define the weak $A_\infty$ class:

\begin{defin}
  A weight $w$ belongs to the \emph{weak $A_\infty$ class} $A_\infty^{\text{weak}}$ if $w \in A_\infty^\sigma$ for some $\sigma > \kappa$.
\end{defin}

\subsection{Weak Reverse Hölder classes}

We define the weak Reverse Hölder classes similarly as we defined the weak $A_\infty$ class.

\begin{defin}
For every $\sigma \ge 1$ and $q > 1$, a weight $w$ belongs to the \emph{$\sigma$-weak $q$-Reverse Hölder class} $RH_q^\sigma$ if there exists a finite constant $[w]_{RH_q}^\sigma$ such that
$w$ satisfies the $\sigma$-weak reverse Hölder property
\begin{align*}
  \left( \fint_B w^q \, d\mu \right)^{1/q} \ \le \ [w]_{RH_q}^\sigma \fint_{\sigma B} w \, d\mu
\end{align*}
for every ball $B$. Here $[w]_{RH_q}^\sigma$ is the smallest of the constants that satisfy the condition above.
\end{defin}
Again, although the definition seems to give us a different weight class depending on $\sigma$ for every $q > 1$, this is not the case:
\begin{theorem}
  \label{thm:wRHI-classes}
  $RH_q^\sigma = RH_q^{\sigma'}$ for every $\sigma,\sigma' > \kappa$.
\end{theorem}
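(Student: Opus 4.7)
The plan is to mirror the proof of Theorem \ref{thm:A-infty-classes}: one inclusion is essentially free from doubling, while the other requires a covering argument. For $\sigma' < \sigma$ I would prove $RH_q^{\sigma'} \subseteq RH_q^{\sigma}$ trivially and then establish $RH_q^{\sigma} \subseteq RH_q^{\sigma'}$ via a cover of $B$ by small balls whose $\sigma$-dilations still fit inside $\sigma' B$.

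For the easy inclusion, observe that $\sigma' B \subseteq \sigma B$ gives $w(\sigma' B) \le w(\sigma B)$, while doubling provides $\mu(\sigma B) \lesssim_{\sigma, \sigma'} \mu(\sigma' B)$; combining these two, $\fint_{\sigma' B} w \lesssim_{\sigma, \sigma'} \fint_{\sigma B} w$, so any $w \in RH_q^{\sigma'}$ automatically satisfies the $\sigma$-weak RHI with a controlled loss in the constant.

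For the substantive inclusion $RH_q^{\sigma} \subseteq RH_q^{\sigma'}$ (with $\sigma > \sigma' > \kappa$), I would fix a ball $B = B(x_0, r)$ and choose $\varepsilon := (\sigma' - \kappa)/(\kappa \sigma) \in (0,1)$; the quasi-triangle inequality then forces $B(y, \sigma \varepsilon r) \subseteq \sigma' B$ for every $y \in B$, and the hypothesis $\sigma' > \kappa$ is precisely what keeps $\varepsilon$ positive. Using the geometrical doubling property, I would cover $B$ by at most $N_\varepsilon$ balls $B_i := B(x_i, \varepsilon r)$ with $x_i \in B$ and apply the $\sigma$-weak RHI to each of them:
\begin{align*}
\int_{B_i} w^q \le \mu(B_i)^{1-q} \bigl([w]_{RH_q}^\sigma\bigr)^q w(\sigma B_i)^q \le \mu(B_i)^{1-q} \bigl([w]_{RH_q}^\sigma\bigr)^q w(\sigma' B)^q.
\end{align*}
Summing over $i$, using $\mu(B_i) \eqsim_\varepsilon \mu(B) \eqsim_{\sigma'} \mu(\sigma' B)$, and taking $q$-th roots then yields $(\fint_B w^q)^{1/q} \lesssim_{\sigma, \sigma', q} [w]_{RH_q}^\sigma \fint_{\sigma' B} w$, as required.

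The only mildly subtle point, rather than a real obstacle, is that after raising to the $q$-th power the factor $\mu(B_i)^{1-q}$ carries a negative exponent, so we need the \emph{lower} bound $\mu(B_i) \gtrsim_\varepsilon \mu(B)$; this follows from $B \subseteq B(x_i, 2\kappa r)$ and doubling. Apart from this bookkeeping, the argument is structurally identical to the one already carried out for the weak $A_\infty$ constants.
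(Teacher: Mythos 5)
Your proposal is correct and follows essentially the same route as the paper's proof: the same trivial inclusion via doubling, the same choice $\varepsilon = (\sigma'-\kappa)/(\kappa\sigma)$, the same covering of $B$ by $N_\varepsilon$ balls $B_i = B(x_i,\varepsilon r)$ with $\sigma B_i \subseteq \sigma' B$, and the same application of the $\sigma$-weak RHI on each $B_i$ followed by doubling comparisons $\mu(B_i)\eqsim_\varepsilon\mu(B)\eqsim_{\sigma'}\mu(\sigma' B)$. The only cosmetic difference is that you pass through $\mu(B_i)^{1-q}$ rather than keeping averages throughout as the paper does, and you correctly flag that this requires the lower bound $\mu(B_i)\gtrsim_\varepsilon\mu(B)$.
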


\begin{proof}
  Let $\sigma' < \sigma$. Then for every $w \in RH_q^{\sigma'}$ we have
  \begin{align*}
    \left( \fint_B w^q \, d\mu \right)^{1/q} \ \le \ [w]_{RH_q}^{\sigma'} \fint_{\sigma' B} w \, d\mu \ \lesssim_{\sigma,\sigma'} \ [w]_{RH_q}^{\sigma'} \fint_{\sigma B} w \, d\mu.
  \end{align*}
  Thus, $RH_q^{\sigma'} \subseteq RH_q^\sigma$.
  
  Let then $w \in RH_q^\sigma$ and let $B := B(x_0,r)$ be any ball, $r > 0$. Then for every $y \in B$, $\varepsilon > 0$ and $z \in B(y,\sigma\varepsilon r)$ we have
  \begin{align*}
    \rho(z,x_0) \ \le \ \kappa(\rho(z,y) + \rho(y,x_0)) \ < \ \kappa(\sigma\varepsilon + 1)r.
  \end{align*}
  Since $\sigma,\sigma' > \kappa$, we can choose a constant $\varepsilon := \frac{\sigma' - \kappa}{\sigma\kappa} > 0$ such that $B(y,\sigma\varepsilon r) \subseteq \sigma' B$
  for every $y \in B$.
  
  By the geometrical doubling property of $\rho$, there exists a finite set $\{z_i \colon i=1,2,\ldots,N_\varepsilon\} \subseteq B$ such that the balls $B_i := B(z_i,\varepsilon r)$ cover
  the ball $B$. Since $B_i \subseteq \kappa(\varepsilon + 1)B$ and
  \begin{align*}
    \sigma'B \ \subseteq \ \frac{\kappa}{\sigma \varepsilon}(\sigma' + 1) \sigma B_i
  \end{align*}
  for every $i = 1,2,\ldots,N_\varepsilon$, we have
  \begin{align*}
    \fint_B w^q \, d\mu \ &\le \ \sum_{i=1}^{N_\varepsilon} \frac{1}{\mu(B)} \int_{B_i} w^q \, d\mu \\
                          &= \ \sum_{i=1}^{N_\varepsilon} \frac{\mu(B_i)}{\mu(B)} \fint_{B_i} w^q \, d\mu \\
                          &\le \ ([w]_{RH_q}^\sigma)^q \sum_{i=1}^{N_\varepsilon} \frac{\mu(B_i)}{\mu(B)} \left( \fint_{\sigma B_i} w \, d\mu \right)^q \\
                          &\le \ ([w]_{RH_q}^\sigma)^q \sum_{i=1}^{N_\varepsilon} \frac{\mu(B_i)}{\mu(B)} \left( \frac{\mu(\sigma'B)}{\mu(\sigma B_i)} \fint_{\sigma'B} w \, d\mu \right)^q 
                        \ \lesssim_{\sigma,\sigma',q} \ ([w]_{RH_p}^\sigma)^q \left( \fint_{\sigma'B} w \, d\mu \right)^q.
  \end{align*}
\end{proof}

The next result follows from the previous proof.

\begin{proposition}
  \label{prop:RH-constants}
  If $\sigma > \sigma' > \kappa$, then
  \begin{align*}
    [w]_{RH_q}^\sigma \lesssim_{\sigma,\sigma'} [w]_{RH_q}^{\sigma'} \lesssim_{\sigma,\sigma'} [w]_{RH_q}^\sigma.
  \end{align*}
\end{proposition}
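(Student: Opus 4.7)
The plan is simply to read off quantitative bounds from the proof of Theorem \ref{thm:wRHI-classes}, since that proof was already written with explicit (though suppressed) structural constants depending on $\sigma$, $\sigma'$, $\kappa$, $D$, and $q$.

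For the first inequality $[w]_{RH_q}^\sigma \lesssim_{\sigma,\sigma'} [w]_{RH_q}^{\sigma'}$, I would argue directly from the definition and the doubling property of $\mu$. Given a ball $B$ and $w \in RH_q^{\sigma'}$, the inclusion $\sigma' B \subseteq \sigma B$ together with $\mu(\sigma B) \lesssim_{\sigma,\sigma'} \mu(\sigma' B)$ (an iteration of the doubling constant depending only on $\log_2(\sigma/\sigma')$ and $D$) yields
\begin{align*}
\fint_{\sigma' B} w \, d\mu \ \le \ \frac{\mu(\sigma B)}{\mu(\sigma' B)} \fint_{\sigma B} w \, d\mu \ \lesssim_{\sigma,\sigma'} \ \fint_{\sigma B} w \, d\mu,
\end{align*}
which combined with the $\sigma'$-weak RHI of $w$ produces the $\sigma$-weak RHI with constant a structural multiple of $[w]_{RH_q}^{\sigma'}$.

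For the reverse inequality $[w]_{RH_q}^{\sigma'} \lesssim_{\sigma,\sigma'} [w]_{RH_q}^{\sigma}$, I would simply repeat the covering argument performed in the proof of Theorem \ref{thm:wRHI-classes}: cover a ball $B$ by $N_\varepsilon$ balls $B_i = B(z_i,\varepsilon r)$ with $z_i \in B$ and $\varepsilon = \frac{\sigma'-\kappa}{\sigma\kappa}$ chosen so that $B(y,\sigma\varepsilon r) \subseteq \sigma' B$ for all $y \in B$, apply the $\sigma$-weak RHI on each $\sigma B_i$, and use the doubling property to pass from $\fint_{\sigma B_i} w$ to $\fint_{\sigma' B}w$. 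The final chain of inequalities in that proof reads off as $\left(\fint_B w^q\right)^{1/q} \le C \, [w]_{RH_q}^\sigma \fint_{\sigma' B} w$, with $C$ a constant depending only on $\sigma$, $\sigma'$, $\kappa$, and $D$ (the $q$ dependence there comes only from taking a $q$-th root of a $q$-th power, so it is harmless once $q$ is fixed); this is exactly the required bound on $[w]_{RH_q}^{\sigma'}$.

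There is no real obstacle here — the proposition is entirely a matter of bookkeeping. The only mild care is in keeping track of which constants depend on $q$; since $q$ is a fixed parameter of the class $RH_q$, writing $\lesssim_{\sigma,\sigma'}$ is consistent with our earlier convention. The argument for $[w]_\infty^\sigma \lesssim_{\sigma,\sigma'} [w]_\infty^{\sigma'}$ in Proposition \ref{prop:A_infty-constants} required the doubling-ball Lemma \ref{lemma:doubling_balls} to produce a lower bound on $[w]_\infty^\sigma$, but here no such device is needed: the reverse-Hölder constant is automatically bounded below by $1$ (by Jensen's inequality applied to $w^q$ on $B$, comparing $\fint_B w$ to $\fint_{\sigma'B} w$ via doubling), so additive structural terms can be absorbed into multiplicative ones at the last step.
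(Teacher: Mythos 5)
Your proposal is correct and follows the same route as the paper: Proposition~\ref{prop:RH-constants} is read off directly from the quantitative content of the proof of Theorem~\ref{thm:wRHI-classes} (the paper simply writes ``follows from the previous proof''). One small inaccuracy in your closing aside: the covering estimate in Theorem~\ref{thm:wRHI-classes} is purely multiplicative in $[w]_{RH_q}^\sigma$, with no additive structural term at all, so no lower bound on $[w]_{RH_q}^{\sigma'}$ is needed; moreover, the heuristic you give for that lower bound (Jensen plus doubling comparing $\fint_B w$ to $\fint_{\sigma'B}w$) does not actually produce it, since doubling controls $\mu(\sigma'B)/\mu(B)$ but not $w(\sigma'B)/w(B)$ — one would instead invoke the Lebesgue differentiation theorem. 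Since that remark is not used in the argument, it does not affect the correctness of the proof.
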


Thus, we can define the weak Reverse Hölder classes the following way.

\begin{defin}
  A weight $w$ belongs to the \emph{weak $q$-Reverse Hölder class} $RH_q^{\text{weak}}$ if $w \in RH_q^\sigma$ for some $\sigma > \kappa$.
\end{defin}

\section{``Dyadic'' characterizations of weak $A_\infty$ and weak Reverse Hölder classes}
\label{section:dyadic_weight_classes}

Dyadic cubes in quasimetric spaces are objects that share many good properties with the usual Euclidean dyadic cubes but, unlike the Euclidean dyadic cubes, 
they are so abstract that it is usually very difficult to say which points actually belong to which cube. That is why it may seem strange to try to characterize 
some weight classes with respect to the dyadic cubes in spaces of homogeneous type. The motivation behind this is that if we can give suitable dyadic characterizations 
for the weight classes, we can use different Euclidean dyadic techniques also in our setting. These techniques make it fairly easy to prove 
some results related to the weak $A_\infty$ and weak Reverse Hölder classes.

We start by presenting some basic results related to the dyadic systems. We use quotation marks around the word dyadic if the definition in question uses several different dyadic systems instead of a single one.

\subsection{Dyadic systems in quasimetric spaces}

For our ``dyadic'' characterizations of $A_\infty^\text{weak}$ and $RH_q^\text{weak}$ we need to use several \emph{adjacent dyadic systems} so that for every ball $B$ we can find 
some cube $Q_B$ such that $B \subseteq Q_B$ and $\mu(B) \approx \mu(Q_B)$. For this we turn to the results of the
second author and Kairema \cite{hytonenkairema} (based on the ideas of the second author and H. Martikainen \cite{hytonenmartikainen}).

The first part of the following theorem is a version of the well-known dyadic constructions in spaces of homogeneous type of M. Christ \cite{christ} and E. Sawyer and R. L. Wheeden \cite{sawyerwheeden} 
with no measurability assumptions on the space $X$. The theorem holds in every geometrically doubling quasimetric space.

\begin{theorem}
  \label{thm:dyadic_systems}
  Suppose that the constant $\delta \in (0,1)$ satisfies $96\kappa^6 \delta \le 1$. Then there exist countable sets of points 
  $\{z_\alpha^{k,t} \colon \alpha \in \mathscr{A}_k\}$, $k \in \Z$, $t = 1,2,\ldots,K = K(\kappa,N,\delta)$, and a finite number of \emph{dyadic systems} 
  $\D^t \coloneqq \{Q_\alpha^{k,t} \colon k \in \Z, \alpha \in \A_k\}$, such that
  \begin{enumerate}
    \item[1)] for every $t \in \{1,2,\ldots,K\}$ and $k \in \Z$ we have
              \begin{enumerate}
                \item[i)] $X = \bigcup_{\alpha \in \A_k} Q_\alpha^{k,t}$ (disjoint union);
                \item[ii)] $Q,P \in \D^t \implies Q \cap P \in \{\emptyset,Q,P\}$;
                \item[iii)] $Q_\alpha^{k,t} \in \D^t \implies B(z_\alpha^{k,t},c_1\delta^k) \subseteq Q_\alpha^{k,t} \subseteq B(z_\alpha^{k,t},C_1\delta^k)$,
                            where $c_1 \coloneqq (12\kappa^4)^{-1}$ and $C_1 \coloneqq 4\kappa^2$;
              \end{enumerate}
    \item[2)] for every ball $B \coloneqq B(x,r)$ there exists a cube $Q_B \in \bigcup_t \D^t$ such that $B \subseteq Q_B$ and $\ell(Q_B) = \delta^{k-1}$, 
              where $k$ is the unique integer such that $\delta^{k+1} < r \le \delta^k$ and $\ell(Q_B) = \delta^{k-1}$ means that $Q_B = Q_\alpha^{k-1,t}$ for some indices $\alpha$ and $t$.
  \end{enumerate}
\end{theorem}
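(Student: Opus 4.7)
The plan is to follow the construction of Hytönen and Kairema \cite{hytonenkairema}, which refines the classical Christ dyadic cube construction \cite{christ} in two ways: it drops any measurability hypothesis on the ambient space, and it produces finitely many \emph{adjacent} systems whose union contains a comparable cube for every ball. Since the statement is essentially verbatim from \cite{hytonenkairema}, what follows is really a sketch of their argument under the standing hypothesis $96\kappa^6\delta \le 1$.

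For part 1), I would fix $t \in \{1,\ldots,K\}$ and, at each scale $k \in \Z$, pick a maximal $\delta^k$-separated family $\{z_\alpha^{k,t}\}_{\alpha \in \A_k} \subseteq X$. Maximality forces the balls $B(z_\alpha^{k,t},\delta^k)$ to cover $X$, while $\delta^k$-separation makes the smaller balls $B(z_\alpha^{k,t},(2\kappa)^{-1}\delta^k)$ pairwise disjoint. Next, I would impose a tree structure by assigning to every $\alpha \in \A_k$ a unique ``parent'' $\widehat{\alpha} \in \A_{k-1}$ with $\rho(z_\alpha^{k,t},z_{\widehat{\alpha}}^{k-1,t}) \le \kappa\delta^{k-1}$; the smallness of $\delta$ guarantees that the best candidate is strictly closer than any other, so the assignment is well defined. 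The cubes are then built recursively via
\begin{align*}
  Q_\alpha^{k,t} \coloneqq \bigcup_{\widehat{\beta}=\alpha} Q_\beta^{k+1,t},
\end{align*}
handled by a standard limiting procedure in the scale parameter. Properties i) and ii) are immediate from the tree structure, while the sandwich property iii) is proved by induction on $k$: summing a geometric series in $\kappa\delta$ and applying $96\kappa^6\delta \le 1$ produces precisely the constants $c_1=(12\kappa^4)^{-1}$ and $C_1=4\kappa^2$.

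For part 2) I would exploit the multiplicity of the systems. Given a ball $B(x,r)$ with $\delta^{k+1}<r\le \delta^k$, I aim to find $t$ and $\alpha \in \A_{k-1}$ with $\rho(x,z_\alpha^{k-1,t})$ so small that the quasitriangle inequality yields $B \subseteq B(z_\alpha^{k-1,t},c_1\delta^{k-1}) \subseteq Q_\alpha^{k-1,t}$. To guarantee that such a well-centered ancestor is available in at least one of the $K$ systems, I would take the center families $\{z_\alpha^{k,t}\}_t$ to be mutually shifted so that their union forms a sufficiently fine net of $X$ at every scale; the geometrical doubling property then bounds the number of required shifts by $K=K(\kappa,N,\delta)$. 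The main obstacle here is coordinating the $K$ systems \emph{simultaneously across all scales}, since the shifted center families must jointly provide a well-centered parent cube for every ball while also individually admitting a coherent tree of the type used in part 1). This is precisely what the probabilistic construction of Hytönen and Martikainen \cite{hytonenmartikainen}, derandomised in \cite{hytonenkairema}, accomplishes; I would invoke that machinery rather than attempt to rederive it here.
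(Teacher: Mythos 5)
Your approach matches the paper's exactly: the paper's proof of Theorem~\ref{thm:dyadic_systems} is nothing more than a citation to Theorem 4.1, the proof of Lemma 4.12, Remark 4.13 and Theorem 2.2 of \cite{hytonenkairema} (with the specific parameter choices $c_0=(4\kappa^2)^{-1}$, $C_0=2\kappa$), which is precisely the machinery your sketch defers to. One small caveat on the sketch itself: the picture of ``mutually shifted center families'' is only a Euclidean heuristic, since in an abstract quasimetric space the $K$ adjacent systems are produced not by translating a net but by making different combinatorial selections (parent assignments and choices of new centers) over a shared skeleton of reference points; likewise, the uniqueness of a closest parent generally requires a tie-breaking rule rather than following automatically from the smallness of $\delta$.
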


\begin{proof}
  See Theorem 4.1, the proof of Lemma 4.12, Remark 4.13 and Theorem 2.2 in \cite{hytonenkairema}.
  The choices $c_0 = (4\kappa^2)^{-1}$ and $C_0 = 2\kappa$ in \cite[Theorem 2.2]{hytonenkairema} are required for \cite[Theorem 4.1]{hytonenkairema} (see \cite[Lemma 4.10]{hytonenkairema}).
\end{proof}

We note that a stronger version of the second part of the previous theorem holds in $\R^n$ \cite[Lemma 2.5]{hytonenlaceyperez} and metric spaces 
\cite[Theorem 5.9]{hytonentapiola} but we do not need the additional properties introduced in those results.

\begin{remark}
  The dyadic cubes $Q_\alpha^{k,t}$ in Theorem \ref{thm:dyadic_systems} are Borel sets regardless of whether balls are Borel sets or not. This follows directly 
  from the construction; see \cite[Proposition 2.11, Lemma 2.18]{hytonenkairema}.
\end{remark}

From now on, let $\{\D^t \colon t = 1,2,\ldots,K\}$ be a fixed collection of adjacent dyadic systems given by Theorem \ref{thm:dyadic_systems}. Let us then denote
\begin{align}
  \label{dyadic_union} \D \coloneqq \bigcup_{t=1}^K \D^t \ \ \ \text{ and } \ \ \ \D^k \coloneqq \bigcup_{t=1}^K \D^{t,k} = \bigcup_{t=1}^K \{Q_\alpha^{k,t} \colon \alpha \in \A_k\}
\end{align}
for every $k \in \Z$. The collection $\D$ is not a dyadic system in the usual sense of the term but by Theorem \ref{thm:dyadic_systems}, the elements of $\D$ share some good properties of both balls 
and dyadic cubes.

Since the definitions of the weight classes in Section \ref{section:weak_weight_classes} involved balls and their dilations, we need a suitable way to enlarge the elements of $\D$ for our ``dyadic'' characterizations for those weight classes. We require that if we enlarge a cube $Q$, its enlargement needs to be another cube that contains all the nearby cubes of $Q$ of that same generation.

\begin{defin}
  Let $Q \in \D$. A \emph{generalized dyadic parent} (gdp) of $Q$ is any cube $Q^*$ such that $\ell(Q^*) = \frac{1}{\delta^2}\ell(Q)$ and
  for every $Q' \in \D$ such that $Q' \cap Q \neq \emptyset$ and $\ell(Q') = \ell(Q)$ we have $Q' \subseteq Q^*$.
\end{defin}

\begin{lemma}
  For every $Q \in \D$ there exists at least one gdp.
\end{lemma}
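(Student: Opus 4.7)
The plan is to exhibit an explicit generalized dyadic parent of a given $Q = Q_\alpha^{k,t} \in \D$ by applying part (2) of Theorem \ref{thm:dyadic_systems} to a sufficiently large ball centered at $z \coloneqq z_\alpha^{k,t}$, and then checking, using the constraint $96\kappa^6\delta \le 1$, that the cube obtained has the correct side length $\delta^{-2}\ell(Q)$.

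First I would bound the location of every potential neighbor. Suppose $Q' = Q_{\alpha'}^{k,t'} \in \D$ with $\ell(Q') = \delta^k$ and $Q' \cap Q \neq \emptyset$. Picking $x \in Q \cap Q'$ and using the containments in part 1(iii) of Theorem \ref{thm:dyadic_systems}, I get $\rho(z, z_{\alpha'}^{k,t'}) \le \kappa(\rho(z,x)+\rho(x,z_{\alpha'}^{k,t'})) \le 2\kappa C_1 \delta^k$, and then for any $y \in Q'$,
\begin{align*}
  \rho(z,y) \le \kappa\bigl(\rho(z,z_{\alpha'}^{k,t'}) + \rho(z_{\alpha'}^{k,t'}, y)\bigr) \le \kappa C_1(2\kappa+1)\delta^k \eqqcolon r.
\end{align*}
Thus every such $Q'$ (including $Q$ itself) is contained in the ball $B \coloneqq B(z,r)$ with $r = 4\kappa^3(2\kappa+1)\delta^k$.

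Next I would apply Theorem \ref{thm:dyadic_systems}(2) to $B$: this yields a cube $Q_B \in \D$ with $B \subseteq Q_B$ and $\ell(Q_B)=\delta^{m-1}$, where $m$ is the unique integer satisfying $\delta^{m+1} < r \le \delta^m$. To identify $m$ I would verify that $\delta^k < r \le \delta^{k-1}$. The lower bound $r > \delta^k$ follows immediately since $4\kappa^3(2\kappa+1) \ge 12$. For the upper bound, $r \le 12\kappa^4\delta^k$ (using $2\kappa+1 \le 3\kappa$), and the hypothesis $\delta \le 1/(96\kappa^6)$ then gives $12\kappa^4\delta \le 1/(8\kappa^2) \le 1$, so $r \le \delta^{k-1}$. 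Consequently $m = k-1$ and $\ell(Q_B) = \delta^{k-2} = \delta^{-2}\ell(Q)$.

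Combining these two steps, every same-generation neighbor $Q'$ of $Q$ lies in $B \subseteq Q_B$, and $Q_B$ has the required enlarged side length, so $Q_B$ is a gdp of $Q$. The only delicate point is the constant tracking in the last step, where I have to make sure the explicit value of $r$ lies in the correct half-open interval so that Theorem \ref{thm:dyadic_systems}(2) returns a cube exactly two generations above $Q$; this is where the assumption $96\kappa^6\delta \le 1$ is essential, and it is the main (though minor) obstacle in the argument.
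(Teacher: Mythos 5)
Your proof is correct and follows essentially the same route as the paper's: bound every same-generation neighbor of $Q$ inside a ball centered at $z_\alpha^{k,t}$ with radius comparable to $\delta^k$, then apply Theorem \ref{thm:dyadic_systems}(2) together with $96\kappa^6\delta\le 1$ to obtain a containing cube in $\D^{k-2}$. The only difference is cosmetic: the paper quotes the slightly coarser radius $3\kappa^2 C_1\delta^k = 12\kappa^4\delta^k$, while you track the quasi-triangle inequality more carefully and get $\kappa C_1(2\kappa+1)\delta^k$, which is $\le 12\kappa^4\delta^k$; both satisfy $\delta^k < r \le \delta^{k-1}$ under the standing assumption on $\delta$, so both yield $\ell(Q_B)=\delta^{k-2}$ as required.
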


\begin{proof}
  Let $Q \coloneqq Q_\alpha^{k,t} \in \D$. If $Q' \cap Q \neq \emptyset$ and $\ell(Q') = \ell(Q)$, then by Theorem \ref{thm:dyadic_systems} we know that 
  $Q,Q' \subseteq B(z_\alpha^{k,t}, 3\kappa^2 C_1 \delta^k)$. Since $3\kappa^2 C_1 \delta^k \le \delta^{k-1}$, by Theorem \ref{thm:dyadic_systems} there exists 
  a cube $Q^* \in \D^{k-2}$ such that $B(z_\alpha^{k,t}, 3\kappa^2 C_1 \delta^k) \subseteq Q^*$.
\end{proof}

\subsection{Localized ``dyadic'' maximal operators and ``dyadic'' weak $A_\infty$}

Before we can formulate our ``dyadic'' characterization of the $A_\infty^\text{weak}$ class, we need to find a substitute for the localized Hardy-Littlewood maximal 
operator $1_B M(1_B \cdot)$ for every ball $B$. For this, let us denote
\begin{align*}
  \Q_Q \ &\coloneqq \ \{Q' \in \D \colon Q' \cap Q \neq \emptyset, \ell(Q') \le \ell(Q)\},\\
  \Q_Q^x \ &\coloneqq \ \{Q' \in \Q_Q \colon x \in Q'\},
\end{align*}
for every $Q \in \D$ and $x \in X$. It follows immediately that if $Q' \in \Q_Q$, then $Q' \subseteq Q^*$ and $(Q')^* \in \Q_{Q^*}$.

\begin{defin}
  For every $Q_0 \in \D$, we define the \emph{localized ``dyadic'' maximal operator} $M_{Q_0}$ with the following formula
  \begin{align*}
    M_{Q_0} f(x) \ = \ \sup_{Q \in \Q_{Q_0}^x} |f|_Q.
  \end{align*}
  We set $M_{Q_0} f(x) = 0$ if $\Q_{Q_0}^x = \emptyset$.
\end{defin}

We note that $\int_Q w \, d\mu \le \int_Q M_Q w \, d\mu$ for every $w$ and $Q$ by \cite[Proposition 4.5]{auscherhytonen}
(see also \cite[Theorem 6.2.4]{krantz} and \cite[Lemma 2.3]{andersonetall}).

In the proof of Lemma \ref{lemma:A_infty_dyadic} we see that we can always approximate Hardy-Littlewood maximal functions locally with some 
localized ``dyadic'' maximal functions. Thus, the localized ``dyadic'' maximal operator is strong enough for our purposes:

\begin{defin}
  We say that a weight belongs to the \emph{``dyadic'' weak $A_\infty$ class} $A_\infty^\D$ if
  \begin{align*}
    [w]_\infty^\D \ \coloneqq \ \sup_Q \inf_{Q^*} \frac{1}{w(Q^*)} \int M_Q w \, d\mu < \infty
  \end{align*}
  where the supremum is taken over all the cubes $Q \in \D$ and the infimum is taken over all the gdps of the cube $Q$.
\end{defin}

\begin{lemma}
  \label{lemma:A_infty_dyadic}
  $A_\infty^\D = A_\infty^\text{\emph{weak}}$.
\end{lemma}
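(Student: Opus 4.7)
The plan is to prove the two inclusions $A_\infty^{\text{weak}}\subseteq A_\infty^\D$ and $A_\infty^\D\subseteq A_\infty^{\text{weak}}$ separately. In each direction the strategy is the same: first establish a pointwise comparison between the Hardy--Littlewood maximal operator and the localized ``dyadic'' maximal operator, and then use Theorem \ref{thm:dyadic_systems}(2) to match a ball with a cube at a carefully chosen scale.

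For $A_\infty^{\text{weak}}\subseteq A_\infty^\D$, I would fix $Q\in\D$ with $\ell(Q)=\delta^k$ and set $B_1\coloneqq B(z_Q,3\kappa^2 C_1\delta^k)$. The same iterated quasi-triangle estimate used in the existence proof for gdps shows that every $Q'\in\Q_Q$ is contained in $B_1$; in particular $M_Q w$ is supported in $B_1$, and the doubling of $\mu$ on balls of radius $\sim\ell(Q')$ upgrades this to the pointwise bound $M_Q w(x)\lesssim M(1_{B_1}w)(x)$. I would then choose $\sigma=2\kappa$ and apply Theorem \ref{thm:dyadic_systems}(2) to the ball $\sigma B_1$: the assumption $96\kappa^6\delta\le 1$ forces the radius $24\kappa^5\delta^k$ of $\sigma B_1$ to lie in $(\delta^k,\delta^{k-1}]$, so the theorem delivers a cube $Q^*\in\D$ with $\ell(Q^*)=\delta^{k-2}$ containing $\sigma B_1$, and this $Q^*$ is a gdp of $Q$ since $B_1$ already contains every same-level neighbor of $Q$. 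Using $w\in A_\infty^\sigma$ on the ball $B_1$ then gives
\begin{align*}
  \int M_Q w \, d\mu \ \lesssim \ \int_{B_1} M(1_{B_1}w) \, d\mu \ \le \ [w]_\infty^\sigma\, w(\sigma B_1) \ \le \ [w]_\infty^\sigma\, w(Q^*),
\end{align*}
which shows $[w]_\infty^\D\lesssim[w]_\infty^{2\kappa}$.

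For $A_\infty^\D\subseteq A_\infty^{\text{weak}}$, fix a ball $B=B(x_0,r)$, choose $k$ with $\delta^{k+1}<r\le\delta^k$, and let $Q_B\in\D$ with $\ell(Q_B)=\delta^{k-1}$ and $B\subseteq Q_B$ be provided by Theorem \ref{thm:dyadic_systems}(2). The reverse pointwise bound $M(1_B w)(x)\lesssim M_{Q_B}w(x)$ on $B$ is obtained by splitting the sup in $M(1_B w)(x)$ into small-ball and large-ball cases. For $B'\ni x$ with $\mathrm{rad}(B')\le\delta^k$, Theorem \ref{thm:dyadic_systems}(2) gives $Q_{B'}\supseteq B'$ with $\mu(Q_{B'})\lesssim\mu(B')$ and $Q_{B'}\in\Q_{Q_B}^x$, whence $\fint_{B'}1_B w\lesssim|w|_{Q_{B'}}\le M_{Q_B}w(x)$; for $\mathrm{rad}(B')>\delta^k$, doubling gives $\mu(B')\gtrsim\mu(B)\gtrsim\mu(Q_B)$, so $\fint_{B'}1_B w\lesssim|w|_B\lesssim|w|_{Q_B}\le M_{Q_B}w(x)$. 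A quasi-triangle computation then shows that every gdp $Q_B^*$ lies in $B(x_0,3\kappa^2 C_1\delta^{k-3})\subseteq\sigma B$ for $\sigma\coloneqq 12\kappa^4/\delta^4>\kappa$, and combining,
\begin{align*}
  \int_B M(1_B w) \, d\mu \ \lesssim \ \int M_{Q_B}w \, d\mu \ \le \ [w]_\infty^\D\, w(Q_B^*) \ \le \ [w]_\infty^\D\, w(\sigma B).
\end{align*}

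The delicate part of the argument is the first direction: the size of a gdp of $Q$ is rigidly prescribed to be $\delta^{-2}\ell(Q)$, while $A_\infty^\sigma$ only offers the freedom of a dilation by some $\sigma>\kappa$. The reason these two enlargements can be matched is precisely the smallness assumption $96\kappa^6\delta\le 1$ in Theorem \ref{thm:dyadic_systems}, which leaves just enough room to fit the dilated ball $\sigma B_1$ inside a cube two levels above $Q$ for a $\sigma$ only slightly larger than $\kappa$. The second direction is then essentially bookkeeping once the pointwise bound and the inclusion $Q_B^*\subseteq\sigma B$ are set up.
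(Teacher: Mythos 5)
Your argument is correct and takes essentially the same route as the paper's proof: both directions rest on a pointwise comparison between the localized ``dyadic'' maximal operator $M_{Q_0}$ and the Hardy--Littlewood maximal operator $M(1_B\,\cdot)$, with Theorem \ref{thm:dyadic_systems}(2) and the smallness hypothesis $96\kappa^6\delta\le 1$ used to match the containing cube $Q_B$ and its gdp against a $\sigma$-dilated ball. The only cosmetic difference is in the direction $A_\infty^\D\subseteq A_\infty^{\text{weak}}$, where the paper first passes to the centered maximal function via \eqref{pointwise_maximal_functions} and then truncates the radius at $2\kappa\,r(B)$, while you perform the small-radius/large-radius case split directly on the non-centered operator; both are routine and yield the same conclusion.
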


\begin{proof} 
  Let $w \in A_\infty^\D$ and let $B$ be a ball. Then for some $k \in \Z$ we have $\delta^{k+2} < 2\kappa \cdot r(B) \le \delta^{k+1}$.
  Let $Q_0 \in \D$ be a cube such that $\ell(Q_0) = \delta^k$ and $B \subseteq Q_0$. Notice that if $x \in B$, then $B \subseteq B(x,2\kappa \cdot r(B))$. Thus,
  for every $x \in B$ we have
  \begin{align*}
    M(1_B w)(x) \overset{\text{\rref{pointwise_maximal_functions}}}{\lesssim} \sup_{r > 0} \, (1_B w)_{B(x,r)}
                = \sup_{r \le 2\kappa \cdot r(B)} \, (1_B w)_{B(x,r)}
                \lesssim \sup_{r \le 2\kappa \cdot r(B)} \, (1_B w)_{Q_{B(x,r)}}.
  \end{align*}
  Since $Q_{B(x,r)} \cap Q_0 \neq \emptyset$ and $\ell(Q_{B(x,r)}) \le \delta^k$, we know that $Q_{B(x,r)} \in \Q_{Q_0}^x$.
  In particular,
  \begin{align*}
    M(1_B w)(x) \lesssim \sup_{Q \in \Q_{Q_0}^x} (1_B w)_Q \le \sup_{Q \in \Q_{Q_0}^x} w_Q = M_{Q_0} w(x).
  \end{align*}
  Hence,
  \begin{align*}
    \int_B M(1_B w) \, d\mu \ \lesssim \ \int M_{Q_0}w \, d\mu \ \le \ [w]_\infty^\D w(Q_0^*) \ \lesssim \ [w]_\infty^\D w(\sigma B)
  \end{align*}
  for a large enough $\sigma > \kappa$ depending only on $\kappa$ and $D$.

  Let then $w \in A_\infty^{2\kappa}$, $Q_0 \coloneqq Q_\alpha^{k,t} \in \D$ and $x \in B \coloneqq B(z_\alpha^{k,t}, 3\kappa^2 C_1 \delta^k)$. Since $Q \subseteq B$ 
  for every $Q \in \Q_{Q_0}$, we have
  \begin{align*}
    M_{Q_0} w(x) = \sup_{Q \in \Q_{Q_0}^x} w_Q = \sup_{Q \in \Q_{Q_0}^x} (1_B w)_Q \lesssim M(1_B w)(x).
  \end{align*}
  Thus,
  \begin{align*}
    \int M_{Q_0} w \, d\mu \ = \ \int_B M_{Q_0} w \, d\mu \ \lesssim \ \int_B M(1_B w) \, d\mu \ \le \ [w]_\infty^{2\kappa} w(2\kappa B) \ \le \ [w]_\infty^{2\kappa} w(Q^*)
  \end{align*}
  for some $Q^*$ since $2\kappa \cdot r(B) = 24\kappa^5 \delta^k \le \delta^{k-1}$.
\end{proof}

The next result follows directly from the previous proof and Proposition \ref{prop:A_infty-constants}.

\begin{lemma}
  \label{lemma:A_constants}
  If $\sigma > \kappa$, then $[w]_\infty^\D \lesssim_\sigma [w]_\infty^\sigma$.
\end{lemma}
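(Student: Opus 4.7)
The statement is essentially a quantitative distillation of what was already established in the proof of Lemma \ref{lemma:A_infty_dyadic}, combined with the scaling comparison of Proposition \ref{prop:A_infty-constants}. My plan is to extract from the second half of that earlier proof a clean inequality $[w]_\infty^\D \lesssim [w]_\infty^{2\kappa}$, and then trade the parameter $2\kappa$ for an arbitrary $\sigma > \kappa$ at the cost of a constant depending on $\sigma$.

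First, I would revisit the argument in the second half of the proof of Lemma \ref{lemma:A_infty_dyadic}. Fix an arbitrary $Q_0 = Q_\alpha^{k,t} \in \D$ and set $B := B(z_\alpha^{k,t}, 3\kappa^2 C_1 \delta^k)$. Every $Q \in \Q_{Q_0}$ is contained in $B$, so for $x \in B$ the pointwise bound $M_{Q_0} w(x) \lesssim M(1_B w)(x)$ holds, while $M_{Q_0} w$ vanishes outside $B$. Integrating and using the definition of $[w]_\infty^{2\kappa}$ yields
\begin{align*}
\int M_{Q_0} w \, d\mu \;\lesssim\; \int_B M(1_B w) \, d\mu \;\le\; [w]_\infty^{2\kappa} \, w(2\kappa B).
\end{align*}
Finally, the radius check $2\kappa \cdot 3\kappa^2 C_1 \delta^k = 24\kappa^5 \delta^k \le \delta^{k-1}$ (using the hypothesis $96\kappa^6\delta \le 1$ from Theorem \ref{thm:dyadic_systems}) provides a cube $Q_0^*$ that is a gdp of $Q_0$ and contains $2\kappa B$, so $w(2\kappa B) \le w(Q_0^*)$. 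Taking the supremum over $Q_0$ and the infimum over gdps gives the key intermediate estimate $[w]_\infty^\D \lesssim [w]_\infty^{2\kappa}$.

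To conclude, I would apply Proposition \ref{prop:A_infty-constants} to compare $[w]_\infty^{2\kappa}$ with $[w]_\infty^\sigma$ for a given $\sigma > \kappa$. If $\sigma \ge 2\kappa$, the right inequality of the proposition (with $\sigma'=2\kappa$) yields $[w]_\infty^{2\kappa} \lesssim_\sigma [w]_\infty^\sigma$. If $\kappa < \sigma < 2\kappa$, the trivial monotonicity part of the proposition gives $[w]_\infty^{2\kappa} \le [w]_\infty^\sigma$ directly. Chaining with the intermediate estimate produces $[w]_\infty^\D \lesssim_\sigma [w]_\infty^\sigma$.

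There is no real obstacle here; the only point requiring a modicum of care is separating the two regimes $\sigma \ge 2\kappa$ and $\kappa < \sigma < 2\kappa$ when invoking Proposition \ref{prop:A_infty-constants}, since that proposition is phrased for an ordered pair of parameters above $\kappa$. Everything else is a direct transcription of the earlier proof.
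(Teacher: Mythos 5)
Your argument is exactly the one the paper intends: extract $[w]_\infty^\D \lesssim [w]_\infty^{2\kappa}$ from the second half of the proof of Lemma \ref{lemma:A_infty_dyadic}, then use Proposition \ref{prop:A_infty-constants} (with the trivial monotonicity for $\kappa < \sigma < 2\kappa$) to replace $2\kappa$ by an arbitrary $\sigma > \kappa$. The paper states only that the lemma ``follows directly from the previous proof and Proposition \ref{prop:A_infty-constants},'' and your write-up is a correct unpacking of that remark.
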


\begin{remark}
  \label{remark:dyadic_A_infty}
  By the proof of Lemma \ref{lemma:A_infty_dyadic} it is easy to see, that also weights $w$ that satisfy
  \begin{align*}
    \sup_{Q} \inf_{Q^{**}} \frac{1}{w(Q^{**})} \int_Q M_Q w \, d\mu \ < \ \infty
  \end{align*}
  belong to the class $A_\infty^\D$.
\end{remark}

\subsection{``Dyadic'' weak $RH_q$}

We prove the self-improving property of the $RH_q^\text{weak}$ classes in Section \ref{section:gehring_lemma} with the help of ``dyadic'' chracterizations of those classes:

\begin{defin}
  We say that a weight $w$ belongs to the \emph{``dyadic'' weak $RH_q$ class} $RH_q^\D$ if
  \begin{align*}
    [w]_{RH_q}^\D \coloneqq \sup_Q \inf_{Q^*} \frac{1}{w_{Q^*}} \left( \fint_{Q} w^q \, d\mu \right)^{1/q} \ < \ \infty,
  \end{align*}
  where the supremum is taken over all the cubes $Q \in \D$ and the infimum is taken over all the gdps of the cube $Q$.
\end{defin}

\begin{lemma}
  \label{lemma:dyadicRHq}
  $RH_q^\D = RH_q^{\text{\emph{weak}}}$ for every $q \in (1,\infty)$.
\end{lemma}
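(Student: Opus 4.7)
The plan is to mirror the two-direction argument of Lemma \ref{lemma:A_infty_dyadic}, replacing the localized dyadic maximal function by an $L^q$-average and trading balls for cubes (and vice versa) via the sandwich property in Theorem \ref{thm:dyadic_systems}(iii) together with the doubling property of $\mu$.

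For the inclusion $RH_q^\D \subseteq RH_q^\text{weak}$, start with $w \in RH_q^\D$ and an arbitrary ball $B$. Apply part (2) of Theorem \ref{thm:dyadic_systems} to find a cube $Q_B \in \D$ with $B \subseteq Q_B$ and $\ell(Q_B) \eqsim r(B)$. Because $Q_B$ is sandwiched between two balls of radius $\eqsim r(B)$ concentric at $z_\alpha^{k-1,t}$ and $B$ is a ball of radius $\eqsim r(B)$, the doubling property yields $\mu(Q_B) \eqsim \mu(B)$, so
\begin{align*}
  \left(\fint_B w^q \, d\mu\right)^{1/q} \lesssim \left(\fint_{Q_B} w^q \, d\mu\right)^{1/q} \le [w]_{RH_q}^\D w_{Q_B^*}
\end{align*}
for any gdp $Q_B^*$. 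Since $Q_B^*$ is contained in a ball of radius $C_1 \delta^{k-3} \eqsim r(B)$ concentric with its center, a routine quasi-triangle computation shows $Q_B^* \subseteq \sigma B$ for some structural $\sigma > \kappa$, and doubling turns $w_{Q_B^*}$ into $\fint_{\sigma B} w\,d\mu$.

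For the reverse inclusion $RH_q^\text{weak} \subseteq RH_q^\D$, invoke Theorem \ref{thm:wRHI-classes} to assume without loss of generality that $w \in RH_q^{2\kappa}$. Given a cube $Q_0 = Q_\alpha^{k,t}$, set $B \coloneqq B(z_\alpha^{k,t}, C_1 \delta^k)$, so that $Q_0 \subseteq B$ by Theorem \ref{thm:dyadic_systems}(iii) and $\mu(B) \eqsim \mu(Q_0)$ by doubling (since $B(z_\alpha^{k,t},c_1\delta^k) \subseteq Q_0$). As already observed in the proof of the existence of gdps, any gdp $Q_0^*$ contains $B(z_\alpha^{k,t},3\kappa^2 C_1\delta^k)$, and since $2\kappa \le 3\kappa^2$, we get $2\kappa B \subseteq Q_0^*$. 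Combining these,
\begin{align*}
  \left(\fint_{Q_0} w^q\,d\mu\right)^{1/q} \lesssim \left(\fint_B w^q\,d\mu\right)^{1/q} \le [w]_{RH_q}^{2\kappa} \fint_{2\kappa B} w\,d\mu \lesssim [w]_{RH_q}^{2\kappa}\, w_{Q_0^*},
\end{align*}
where the final step uses doubling once more to compare $\mu(2\kappa B)$ with $\mu(Q_0^*)$ (both are $\eqsim \mu(B(z_\alpha^{k,t},\delta^{k-2}))$).

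The argument is essentially routine; the only bookkeeping point is ensuring the chosen $\sigma$ in each direction is strictly larger than $\kappa$ so that Theorem \ref{thm:wRHI-classes} permits switching between $\sigma$-parameters. The principal obstacle, if one, is this geometric chasing of constants ($c_1,C_1,\kappa,\delta$) to verify that a gdp genuinely absorbs a $2\kappa$-dilation of the inscribed ball; but this follows at once from the explicit values $c_1=(12\kappa^4)^{-1}$, $C_1=4\kappa^2$ given in Theorem \ref{thm:dyadic_systems}, exactly as used in the proof of the existence of gdps.
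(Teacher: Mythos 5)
Your proof is correct and follows essentially the same route as the paper: in each direction you pass between a ball and a cube of comparable scale using Theorem \ref{thm:dyadic_systems}, and absorb the dilation into a gdp (resp.\ a $\sigma$-dilate) exactly as the paper does; the paper merely presents the two inclusions in the opposite order and writes ``let $w \in RH_q^{2\kappa}$'' outright instead of invoking Theorem \ref{thm:wRHI-classes} to normalize. One small wording slip in the $RH_q^\D \subseteq RH_q^\text{weak}$ direction: the bound $(\fint_{Q_B} w^q)^{1/q} \le [w]_{RH_q}^\D\, w_{Q_B^*}$ holds for \emph{some} gdp $Q_B^*$ (the definition of $[w]_{RH_q}^\D$ has an infimum over gdps), not for ``any'' gdp --- but since your geometric observation $Q_B^* \subseteq \sigma B$ and the measure comparison $\mu(Q_B^*)\eqsim\mu(\sigma B)$ hold for every gdp, the particular one furnished by the infimum works, and the conclusion stands.
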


\begin{proof}
  Let $w \in RH_q^{2\kappa}$ and let $Q \coloneqq Q^{k,t} \in \D$. By Theorem \ref{thm:dyadic_systems}, we know that there exists a cube $Q^* \in \D^{k-2}$ such that 
  $Q \subseteq B(z^{k,t},C_1 \delta^k) \subseteq B(z^{k,t},2\kappa C_1 \delta^k) \subseteq Q^*$. Thus,
  \begin{align*}
    \left( \fint_Q w^q \, d\mu \right)^{1/q} \ &\lesssim \ \left( \fint_{B(z^{k,t},C_1 \delta^k)} w^q \, d\mu \right)^{1/q} \\
                                             \ &\le \ [w]_{RH_q}^{2\kappa}\fint_{2\kappa B(z^{k,t},C_1 \delta^k)} w \, d\mu
                                             \ \lesssim \ [w]_{RH_q}^{2\kappa} \fint_{Q^*} w \, d\mu,
  \end{align*}
  so $w \in RH_q^\D$.
  
  Let then $w \in RH_q^\D$ and let $B \coloneqq B(x,r)$ be any ball. Then
  \begin{align*}
    \left( \fint_B w^q \, d\mu \right)^{1/q} \lesssim \left( \fint_{Q_B} w^q \, d\mu \right)^{1/q}
                                             \le [w]_{RH_q}^\D \fint_{Q_B^*} w \, d\mu
                                             \lesssim [w]_{RH_q}^\D \fint_{\sigma B} w \, d\mu,
  \end{align*}
  for large enough $\sigma > \kappa$ independent of $B$.
\end{proof}

The next result follows directly from previous proof and Proposition \ref{prop:RH-constants}.

\begin{lemma}
  \label{lemma:dyadic_RH_q_constants}
  If $\sigma > \kappa$, then $[w]_{RH_q}^\D \lesssim_\sigma [w]_{RH_q}^\sigma \lesssim_\sigma [w]_{RH_q}^\D$.
\end{lemma}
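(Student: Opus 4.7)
The plan is to extract quantitative constants from the proof of Lemma \ref{lemma:dyadicRHq} and glue them to the quantitative statement of Proposition \ref{prop:RH-constants}. Reading the earlier proof carefully, the two implications gave not just containments but actual bounds: the first direction showed that $[w]_{RH_q}^\D \lesssim [w]_{RH_q}^{2\kappa}$ (a constant depending only on structural data), and the second direction showed that for a specific choice $\sigma_0 > \kappa$ determined by $\kappa$ and $D$ (the constant that makes $\sigma_0 B \supseteq Q_B^*$ uniformly in $B$), one has $[w]_{RH_q}^{\sigma_0} \lesssim [w]_{RH_q}^\D$. Both implicit constants are structural, so in particular they depend only on $\sigma$ in a trivial way at this stage.

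To prove the stated inequality $[w]_{RH_q}^\D \lesssim_\sigma [w]_{RH_q}^\sigma$, I would first fix an arbitrary $\sigma > \kappa$ and apply Proposition \ref{prop:RH-constants} to pass from $\sigma$ to $2\kappa$: this yields $[w]_{RH_q}^{2\kappa} \lesssim_\sigma [w]_{RH_q}^\sigma$ (here we are using that $2\kappa > \kappa$ so the proposition applies with the comparison pair $(\max(\sigma,2\kappa), \min(\sigma,2\kappa))$). Chaining with $[w]_{RH_q}^\D \lesssim [w]_{RH_q}^{2\kappa}$ from the first half of the proof of Lemma \ref{lemma:dyadicRHq} gives the desired bound.

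For the reverse inequality $[w]_{RH_q}^\sigma \lesssim_\sigma [w]_{RH_q}^\D$, I would use the second half of the proof of Lemma \ref{lemma:dyadicRHq} to get $[w]_{RH_q}^{\sigma_0} \lesssim [w]_{RH_q}^\D$ for the specific $\sigma_0 > \kappa$ produced there, then invoke Proposition \ref{prop:RH-constants} once more to transfer the exponent from $\sigma_0$ to any $\sigma > \kappa$, picking up an $\sigma_0$-and-$\sigma$-dependent constant which, since $\sigma_0$ is structural, depends only on $\sigma$ (and $D,\kappa$). Combining the two estimates completes the proof.

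There is no genuine obstacle here: the real content lives in Lemma \ref{lemma:dyadicRHq} and Proposition \ref{prop:RH-constants}. The only point that needs a moment of care is to verify that the implicit constants extracted from the proof of Lemma \ref{lemma:dyadicRHq} really are structural (so that they can be absorbed without any $\sigma$-dependence), and to invoke Proposition \ref{prop:RH-constants} with the correct ordering of exponents on the two sides of the chain. Once that is checked, the conclusion is immediate.
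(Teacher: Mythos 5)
Your proposal is correct and is exactly what the paper has in mind: the paper's proof is the one-line remark that the lemma ``follows directly from previous proof and Proposition~\ref{prop:RH-constants},'' and your argument simply unwinds that, extracting the two structural bounds $[w]_{RH_q}^\D\lesssim[w]_{RH_q}^{2\kappa}$ and $[w]_{RH_q}^{\sigma_0}\lesssim[w]_{RH_q}^\D$ from the proof of Lemma~\ref{lemma:dyadicRHq} and then transferring exponents via Proposition~\ref{prop:RH-constants}. No gap; you just made the intended chain explicit.
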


\section{Weak Reverse Hölder property of the weak $A_\infty$ class}
\label{section:A_infty_wRHI}

Now we are ready to prove that the $A_\infty^{\text{weak}}$ weights satisfy a weak RHI. Instead of proving it directly, we show
that the weights of $A_\infty^\D$ satisfy a ``dyadic'' weak RHI which then implies the original claim. This way we can use the properties 
of the underlying dyadic structures and borrow some techniques from the Euclidean setting. For the following results, let $S \ge 1$ be a 
constant such that for all the cubes $Q_0 \in \D$ and $Q \in \Q_{Q_0}$, $\ell(Q) = \ell(Q_0)$, we have $\mu(Q_0^*) \le S \mu(Q)$.

Our poof follows closely the ideas and structure of the proof the Euclidean ``$A_\infty \Rightarrow \text{RHI}$'' theorem by the second author, Pérez and Rela 
\cite[Theorem 2.3]{hytonenperezrela}. Since the proof in question uses only one dyadic system, we need to take some additional arguments into consideration:

\begin{lemma}
  \label{lemma:A_infty_wRHI}
  Let $Q_0 \in \D$ and $\lambda \ge Sw_{Q_0^*}$. Then there exists a family of cubes $\{Q_j\}_j \subseteq \Q_{Q_0}$ such that 
  $\{M_{Q_0}w > \lambda\} = \bigcup_j Q_j$ and
  \begin{enumerate}
    \item[1)] $Q_j \subseteq Q_0^*$ for every $j$;
    \item[2)] $w_{Q_j} > \lambda$ and $w_Q \le \lambda$ for every $Q \in \D$ such that $Q_j \subsetneq Q$;
    \item[3)] $\int_{Q_j} M_{Q_0}w \, d\mu \le S \int_{Q_j} M_{Q_j}w \, d\mu$ for every $j$.
  \end{enumerate}
\end{lemma}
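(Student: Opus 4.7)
The strategy is to carry out a dyadic Calderón--Zygmund-type stopping-time construction on $\D = \bigcup_{t=1}^K \D^t$, adapted from the single-system Euclidean argument in the proof of \cite[Theorem 2.3]{hytonenperezrela}. The generalized dyadic parents and the constant $S$ will compensate for the fact that cubes from different systems $\D^t$ need not nest.

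The first preliminary observation is that the hypothesis $\lambda \ge S\,w_{Q_0^*}$ excludes stopping cubes at level $\ell(Q_0)$: any $P \in \Q_{Q_0}$ with $\ell(P)=\ell(Q_0)$ is contained in $Q_0^*$ by the gdp definition, and the definition of $S$ gives $\mu(Q_0^*)\le S\,\mu(P)$, so $w_P \le S\,w_{Q_0^*} \le \lambda$. Every candidate stopping cube therefore has level strictly less than $\ell(Q_0)$. I then construct $\{Q_j\}_j$ system by system: for each $t \in \{1,\dots,K\}$, take the cubes in $\D^t \cap \Q_{Q_0}$ that are maximal with respect to inclusion in $\D^t$ among those with $w_Q > \lambda$, and let $\{Q_j\}$ be the union of these families over $t$. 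The preliminary observation guarantees that each $Q_j$ has a genuine $\D^{t_j}$-parent which still lies in $\Q_{Q_0}$ and, by maximality, has $w$-average $\le\lambda$.

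With this construction, (1), (2) and the set identity are fairly direct. For the identity: if $M_{Q_0}w(x) > \lambda$ then some $P \in \Q_{Q_0}^x$ satisfies $w_P > \lambda$, and its maximal $\D^{t_P}$-ancestor in $\Q_{Q_0}$ with $w>\lambda$ is a $Q_j$ containing $x$; the reverse inclusion follows from $w_{Q_j}>\lambda$. For (1), the $\D^{t_j}$-ancestor of $Q_j$ at level $\ell(Q_0)$ lies in $\Q_{Q_0}$ (it still intersects $Q_0$), so the gdp definition forces it, and hence $Q_j$, into $Q_0^*$. Property (2) is the maximality of the construction (with the ancestor at level $\ell(Q_0)$ handled by the preliminary observation).

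The real content is (3), for which the plan is to prove the pointwise estimate
\[
M_{Q_0}w(x) \;\le\; \max\bigl(M_{Q_j}w(x),\, S\lambda\bigr) \qquad\text{for all }x \in Q_j.
\]
Given this, $M_{Q_j}w \ge w_{Q_j} > \lambda$ on $Q_j$ gives $S\lambda \le S\,M_{Q_j}w$ pointwise on $Q_j$, so $M_{Q_0}w \le S\,M_{Q_j}w$ on $Q_j$, and integration over $Q_j$ yields (3). To verify the pointwise bound, fix $x \in Q_j$ and $Q \in \Q_{Q_0}^x$. If $\ell(Q)\le\ell(Q_j)$ then $Q \in \Q_{Q_j}^x$ and $w_Q \le M_{Q_j}w(x)$. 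If $\ell(Q)>\ell(Q_j)$, compare $Q$ with $\bar Q \in \D^{t_j}$, the ancestor of $Q_j$ in its own system at level $\ell(Q)$: since $\bar Q \supsetneq Q_j$ and $\bar Q \in \Q_{Q_0}$, the construction's maximality gives $w_{\bar Q} \le \lambda$. As $Q$ and $\bar Q$ share $x$ and have equal level, the gdp definition places $Q \subseteq \bar Q^*$ and the definition of $S$ gives $\mu(\bar Q^*) \le S\,\mu(Q)$; combining $w(Q)\le w(\bar Q^*)$ with these comparisons, together with the control on $w(\bar Q^*)$ coming from the maximality applied through the gdp structure, produces $w_Q \le S\lambda$.

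\textbf{Main obstacle.} The delicate case in (3) is precisely when $\ell(Q)>\ell(Q_j)$ and $Q$ belongs to a dyadic system $\D^{t'}$ with $t'\neq t_j$: such a $Q$ is not a $\D^{t_j}$-ancestor of $Q_j$, so the stopping-time maximality does not directly bound $w_Q$. Routing the estimate through the $\D^{t_j}$-ancestor $\bar Q$ and its generalized dyadic parent $\bar Q^*$, at the cost of the measure-comparison factor $S$, is the new ingredient over the Euclidean single-system argument and is exactly what produces the $S$ on the right-hand side of (3).
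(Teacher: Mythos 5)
Your decomposition and overall strategy coincide with the paper's: both proofs split $\Q_{Q_0}^x$ at the threshold $\ell(Q_j)$, treat the small cubes as belonging to $\Q_{Q_j}^x$, and handle the large cubes by passing to a generalized dyadic parent and invoking the constant $S$. The one point where your argument differs and where it has a genuine gap is the final step of the large-cube case.

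You correctly extract $w_{\bar Q}\le\lambda$ from the maximality of the construction, since $\bar Q$ is a strict $\D^{t_j}$-ancestor of $Q_j$ lying in $\Q_{Q_0}$. However, what you then actually need is a bound on $w_{\bar Q^*}$, not on $w_{\bar Q}$: you have $w(Q)\le w(\bar Q^*)$ and $\mu(\bar Q^*)\le S\mu(Q)$, so $w_Q\le S\,w_{\bar Q^*}$, and the chain only closes if $w_{\bar Q^*}\le\lambda$. The phrase ``the control on $w(\bar Q^*)$ coming from the maximality applied through the gdp structure'' does not supply this: $\bar Q^*$ strictly contains $\bar Q$, is at level $\delta^{-2}\ell(Q)$ (hence possibly above $\ell(Q_0)$), and need not lie in $\D^{t_j}$, so $w_{\bar Q}\le\lambda$ tells you nothing about $w_{\bar Q^*}$, and the system-by-system maximality in $\Q_{Q_0}$ does not apply to it directly. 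The paper closes this step differently and more cleanly: it takes $Q^*$, a gdp of $Q$ itself, notes that $Q_j\subsetneq Q^*$ (the ancestor of $Q_j$ at level $\ell(Q)$ meets $Q$ at $x$, hence lies in $Q^*$), and then invokes property~(2) applied to $Q^*$ to get $w_{Q^*}\le\lambda$, giving $w_Q\le Sw_{Q^*}\le S\lambda$; and it splits off the edge case $\ell(Q)=\ell(Q_0)$ and handles it with $Q_0^*$ and the hypothesis $\lambda\ge Sw_{Q_0^*}$ directly, yielding $w_Q\le Sw_{Q_0^*}\le\lambda$. You can repair your argument by the same device: observe $Q_j\subsetneq \bar Q\subseteq\bar Q^*$ and appeal to property~(2) for $\bar Q^*$ rather than to maximality for $\bar Q$; but as written the step is not justified.

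One smaller stylistic point: you fold the level-$\ell(Q_0)$ case into the generic $\ell(Q)>\ell(Q_j)$ case. It is worth treating it separately, as the paper does, because there the gdp route gives a gdp at level $\delta^{-2}\ell(Q_0)$ while the hypothesis $\lambda\ge S w_{Q_0^*}$ hands you the needed estimate with $Q_0^*$ for free and avoids any question about the level of the gdp.
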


\begin{remark}
  In our generality we might encounter a situation where some cubes $Q_\alpha^{k,t}$ and $Q_\beta^{k-m,t}$ contain exactly the same points for all $m = 1,2,\ldots,l$
  and some indices $\alpha$ and $\beta = \beta(k)$. Thus, in this lemma we assume that the quantities $\ell(Q_j)$ are maximal. Since $Q_j \subsetneq Q_0$ 
  for all $j$, we know that the quantities $\ell(Q_j)$ are bounded.
\end{remark}

\begin{proof}[Proof of Lemma \ref{lemma:A_infty_wRHI}]
  Notice that $M_{Q_0}w(x) > \lambda$ if and only if $w_Q > \lambda$ for some $Q \ni x$, $\ell(Q) < \ell(Q_0)$. 
  Thus, we know that $\{M_{Q_0}w > \lambda\} = \bigcup_j Q_j$ for some cubes $Q_j$ such that $\ell(Q_j) < \ell(Q_0)$. The properties
  1 and 2 follow immediately.
  
  Let us then prove the property 3. Notice that if $x \in Q_j$, then
  \begin{align*}
    \Q_{Q_0}^x \ = \ \{Q \in \Q_{Q_0}^x \colon \ell(Q) \le \ell(Q_j) \} \cup \{ Q \in \Q_{Q_0}^x \colon \ell(Q_j) < \ell(Q) \le \ell(Q_0)\} \ \eqqcolon \ \mathcal{A} \cup \mathcal{B},
  \end{align*}
  where $\mathcal{A} \subseteq \Q_{Q_j}^x$ and $\mathcal{B}$ is a finite set. Let $Q \in \mathcal{B}$.
  \begin{enumerate}
    \item[i)] If $\ell(Q) < \ell(Q_0)$, then $Q^* \in \Q_{Q_0}$ and $Q_j \subsetneq Q^*$. Thus, by property 2 we have 
              \begin{align*}
                w_Q \le Sw_{Q^*} \le S\lambda \le Sw_{Q_j}.
              \end{align*}
              
    \item[ii)] If $\ell(Q) = \ell(Q_0)$, then $Q \subseteq Q_0^*$ and
               \begin{align*}
                 w_Q \le Sw_{Q_0^*} \le \lambda \le w_{Q_j}.
               \end{align*}
  \end{enumerate}
  Hence, for every $x \in Q_j$ we have
  \begin{align*}
    M_{Q_0}w(x) \, = \, \sup_{Q \in \mathcal{A} \cup \mathcal{B}} w_Q \, \le \, S \cdot \sup_{Q \in \Q_{Q_j}^x} w_Q \, = \, S \cdot M_{Q_j} w(x)
  \end{align*}
  and the property 3 follows.
\end{proof}

The next lemma is a counterpart of \cite[Lemma 2.2]{hytonenperezrela}.

\begin{lemma}
  \label{sharp_lemma}
  Let $w \in A_\infty^\D$ and let $Q_0 \in \D$. Then for any $0 < \varepsilon \le \frac{1}{2S^2K[w]_\infty^\D}$, we have
  \begin{align*}
    \frac{1}{\mu(Q_0)} \int \left(M_{Q_0}w\right)^{1+\varepsilon} d\mu \ \le \ 2S^{1+\varepsilon}[w]_\infty^\D \left( \fint_{Q_0^*} w \, d\mu \right)^{1+\varepsilon}.
  \end{align*}
\end{lemma}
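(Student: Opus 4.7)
The plan is to adapt the Euclidean layer-cake argument of \cite[Lemma 2.2]{hytonenperezrela} to the ``dyadic'' setting, with Lemma \ref{lemma:A_infty_wRHI} playing the role of the standard Calderón--Zygmund stopping-time decomposition. Fix $Q_0 \in \D$ and a gdp $Q_0^*$ of $Q_0$ that (nearly) realizes the infimum in the definition of $[w]_\infty^\D$; set $\lambda_0 := Sw_{Q_0^*}$. Starting from the layer-cake identity
\begin{align*}
\int (M_{Q_0}w)^{1+\varepsilon}\,d\mu \ = \ \varepsilon \int_0^\infty \lambda^{\varepsilon-1} \int_{\{M_{Q_0}w > \lambda\}} M_{Q_0}w\,d\mu\,d\lambda,
\end{align*}
I would split the outer $d\lambda$ integral at $\lambda_0$.

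On the range $\lambda \le \lambda_0$ I would simply dominate $\int_{\{M_{Q_0}w>\lambda\}} M_{Q_0}w\,d\mu \le \int M_{Q_0}w\,d\mu \le [w]_\infty^\D w(Q_0^*)$ via the definition of $[w]_\infty^\D$; integrating in $\lambda$ contributes an amount of order $S^\varepsilon [w]_\infty^\D \mu(Q_0^*) w_{Q_0^*}^{1+\varepsilon}$. On the range $\lambda > \lambda_0$ I would invoke Lemma \ref{lemma:A_infty_wRHI} to obtain stopping cubes $\{Q_j\} \subseteq \Q_{Q_0}$ with $\{M_{Q_0}w > \lambda\} = \bigcup_j Q_j$. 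For each $j$, combining property 3 of that lemma with the definition of $[w]_\infty^\D$ applied to $Q_j$ and a suitably chosen gdp $Q_j^*$ yields $\int_{Q_j} M_{Q_0}w\,d\mu \le S[w]_\infty^\D w(Q_j^*)$; property 2 of the lemma (since $Q_j \subsetneq Q_j^*$) forces $w_{Q_j^*} \le \lambda$, and the definition of $S$ gives $\mu(Q_j^*) \le S\mu(Q_j)$, so that $w(Q_j^*) \le S\lambda\mu(Q_j)$ and hence $\int_{Q_j} M_{Q_0}w\,d\mu \le S^2[w]_\infty^\D \lambda\mu(Q_j)$.

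Summing over $j$ requires a short bounded-overlap argument: although the $Q_j$ may come from any of the $K$ dyadic systems $\D^t$ and so need not be pairwise disjoint, within each fixed system $\D^t$ the stopping family is disjoint by maximality, which gives $\sum_j \mu(Q_j) \le K\mu(\{M_{Q_0}w>\lambda\})$. This produces the pointwise estimate
\begin{align*}
\int_{\{M_{Q_0}w>\lambda\}} M_{Q_0}w\,d\mu \ \le \ S^2K[w]_\infty^\D \lambda\mu(\{M_{Q_0}w>\lambda\}),
\end{align*}
and substituting back converts the $\lambda>\lambda_0$ piece of the layer cake into $\tfrac{\varepsilon S^2K[w]_\infty^\D}{1+\varepsilon}\int(M_{Q_0}w)^{1+\varepsilon}\,d\mu$ (using $\int_0^\infty \lambda^\varepsilon\mu(\{M_{Q_0}w>\lambda\})d\lambda = \tfrac{1}{1+\varepsilon}\int(M_{Q_0}w)^{1+\varepsilon}d\mu$). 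Under the hypothesis $\varepsilon \le \tfrac{1}{2S^2K[w]_\infty^\D}$ this is at most half of the left-hand side, so the term absorbs; dividing through by $\mu(Q_0)$ and using $\mu(Q_0^*) \le S\mu(Q_0)$ then delivers the desired bound $2S^{1+\varepsilon}[w]_\infty^\D \bigl(\fint_{Q_0^*} w\,d\mu\bigr)^{1+\varepsilon}$.

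The main obstacle, and the place where the argument genuinely departs from its Euclidean ancestor, is the bounded-overlap step across the $K$ adjacent dyadic systems; this is exactly what forces the factor $K$ into the admissible range of $\varepsilon$ and into the CZ estimate. A secondary technical issue is that the absorption step tacitly assumes $\int(M_{Q_0}w)^{1+\varepsilon}d\mu<\infty$ a priori, which is easily arranged by first running the argument with $\min(M_{Q_0}w,N)$ in place of $M_{Q_0}w$ and then letting $N\to\infty$ by monotone convergence.
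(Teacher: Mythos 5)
Your proposal is correct and follows essentially the same route as the paper's own proof: a layer-cake formula for $\int (M_{Q_0}w)^{1+\varepsilon}\,d\mu$ split at $\lambda_0 = Sw_{Q_0^*}$, with the low-$\lambda$ range handled directly by the definition of $[w]_\infty^\D$ and the high-$\lambda$ range handled via the stopping cubes from Lemma \ref{lemma:A_infty_wRHI}, using property 3 to localize, property 2 plus the definition of $S$ to get $w(Q_j^*)\le S\lambda\mu(Q_j)$, and disjointness within each of the $K$ systems to sum; the second term is then absorbed under the hypothesis on $\varepsilon$. The only cosmetic difference is that the paper sets up the truncation $M = \min\{M_{Q_0}w, n\}$ at the outset and passes to the limit by monotone convergence at the end, whereas you defer this qualitative finiteness point to a closing remark — either ordering is fine.
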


\begin{proof}
  Let first $M \coloneqq M^n$, $n \in \N$, be a bounded localized ``dyadic'' maximal operator: $Mf(x) = \min\{M_{Q_0}f(x), n\}$. Let us define 
  $\Omega_\lambda \coloneqq \{Mw > \lambda\}$ for every $\lambda \ge 0$. Then
  \begin{align*}
    \int \left( Mw \right)^{1+\varepsilon} \, d\mu \ &= \ \int_0^\infty \varepsilon \lambda^{\varepsilon-1} Mw(\Omega_\lambda) \, d\lambda \\
                                                     &\le \ \int_0^{Sw_{Q_0^*}} \varepsilon \lambda^{\varepsilon-1} \left( \int Mw \, d\mu \right) d\lambda + \int_{Sw_{Q_0^*}}^\infty \varepsilon \lambda^{\varepsilon -1} Mw(\Omega_\lambda) \, d\lambda \\
                                                     &\le \ (Sw_{Q_0^*})^\varepsilon [w]_\infty^\D w(Q_0^*) + \int_{Sw_{Q_0^*}}^\infty \varepsilon \lambda^{\varepsilon -1} Mw(\Omega_\lambda) \, d\lambda.
  \end{align*}  
  Let $\lambda \ge Sw_{Q_0^*}$ and let $\{Q_j\}_j$ be as in Lemma \ref{lemma:A_infty_wRHI}. Then
  \begin{align*}
    Mw(\Omega_\lambda) \ \le \ \sum_j \int_{Q_j} Mw \, d\mu
                         &\overset{\ref{lemma:A_infty_wRHI}, 3)}{\le} \ \sum_j S \int M_{Q_j} w \, d\mu \\ 
                         &\le \ \sum_j S[w]_\infty^\D w_{Q_j^*} \mu(Q_j^*) \\
                         &\le \ \sum_j S^2[w]_\infty^\D \lambda \mu(Q_j) 
                       \ \le \ S^2 K[w]_\infty^\D \lambda \mu(\Omega_\lambda)
  \end{align*}
  since $\bigcup_j Q_j = \bigcup_{t=1}^K \bigcup_j Q_j^t$ where $Q_j^t \cap Q_k^t = \emptyset$ for $j \neq k$ for every $t = 1, 2, \ldots, K$.
  Hence,
  \begin{align*}
    \int \left( Mw \right)^{1+\varepsilon} \, d\mu \ &\le \ (Sw_{Q_0^*})^\varepsilon [w]_\infty^\D w(Q_0^{*}) + \varepsilon S^2 K[w]_\infty^\D \int_{Sw_{Q_0^*}}^\infty \lambda^\varepsilon \mu(\Omega_\lambda) \, d\lambda \\
                                                     &\le \ (Sw_{Q_0^{*}})^\varepsilon [w]_\infty^\D w(Q_0^*) + \frac{\varepsilon S^2 K[w]_\infty^\D}{1+\varepsilon} \int (Mw)^{1+\varepsilon} \, d\mu,
  \end{align*}
  and furthermore
  \begin{align*}
    \frac{1}{\mu(Q_0)} \int \left( Mw \right)^{1+\varepsilon} \, d\mu \ \le \ S^{1+\varepsilon}[w]_\infty^\D w_{Q_0^*}^{1+\varepsilon} + \frac{\varepsilon S^2K[w]_\infty^\D}{1+\varepsilon} \frac{1}{\mu(Q_0)}\int (Mw)^{1+\varepsilon} \, d\mu.
  \end{align*}
  Now we can use the boundedness of $M$ and move the last term to the left hand side which gives us the desired inequality for the operator $M$ and for every $0 < \varepsilon \le \frac{1}{2S^2K[w]_\infty^\D}$.
  
  The original claim follows now from the previous case and the monotone convergence theorem as $n \to \infty$.
\end{proof}

\begin{theorem}[Weak Reverse Hölder Inequality for $A_\infty^\D$]
  \label{thm:wRHI_dyadic}
  Let $w \in A_\infty^\D$ and $Q_0 \in \D$. Then 
  \begin{align*}
    \fint_{Q_0} w^{1+\varepsilon} \, d\mu \ \le \ 2S^{1+\varepsilon}\left( \fint_{Q_0^*} w \, d\mu \right)^{1+\varepsilon}
  \end{align*}
  for every $0 < \varepsilon \le \frac{1}{2S^2 K[w]_\infty^\D}$. In particular, $w \in RH_{1+\varepsilon}^\D$ and $[w]_{RH_{1+\varepsilon}}^\D \lesssim 1$.
\end{theorem}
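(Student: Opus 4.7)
The theorem is essentially a one-line consequence of Lemma \ref{sharp_lemma}: all that is missing is a pointwise bound of $w$ by $M_{Q_0}w$ on $Q_0$. So the plan splits into two short steps.

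\textbf{Step 1: pointwise domination.} I would first establish
\[
  w(x) \;\le\; M_{Q_0} w(x) \qquad \text{for $\mu$-a.e.\ } x \in Q_0.
\]
Fix any $t \in \{1,\ldots,K\}$. By the Lebesgue differentiation theorem for the dyadic system $\D^t$ (valid in this setting; compare the references cited just after the definition of $M_{Q_0}$, namely \cite[Proposition 4.5]{auscherhytonen} and \cite[Theorem 6.2.4]{krantz}), for $\mu$-a.e.\ $x$ the averages $w_Q$ over cubes $Q \in \D^t$ containing $x$ converge to $w(x)$ as $\ell(Q) \to 0$. For $x \in Q_0$ and $Q$ small enough we have $x \in Q \cap Q_0$ and $\ell(Q) < \ell(Q_0)$, so $Q \in \Q_{Q_0}^x$; thus $M_{Q_0}w(x) \ge w_Q$ for all such $Q$, and passing to the limit gives the pointwise bound.

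\textbf{Step 2: integration.} Raising to the $(1+\varepsilon)$-th power, integrating over $Q_0$ and dividing by $\mu(Q_0)$ yields
\[
  \fint_{Q_0} w^{1+\varepsilon}\, d\mu
  \;\le\; \frac{1}{\mu(Q_0)} \int_{Q_0} (M_{Q_0} w)^{1+\varepsilon}\, d\mu
  \;\le\; \frac{1}{\mu(Q_0)} \int (M_{Q_0} w)^{1+\varepsilon}\, d\mu.
\]
For $0 < \varepsilon \le \frac{1}{2S^2 K[w]_\infty^\D}$, Lemma \ref{sharp_lemma} now bounds the right-hand side by a multiple of $(w_{Q_0^*})^{1+\varepsilon}$, which gives the claimed weak dyadic reverse Hölder estimate. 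Taking $(1+\varepsilon)$-th roots and using that $S$ and $K$ are structural constants (so that $2^{1/(1+\varepsilon)}S$ is bounded uniformly in $\varepsilon \in (0,1)$) delivers the ``in particular'' conclusion $[w]_{RH_{1+\varepsilon}}^\D \lesssim 1$, since the infimum over $Q_0^*$ in the definition of $[w]_{RH_{1+\varepsilon}}^\D$ is automatically achieved by the $Q_0^*$ produced by Lemma \ref{sharp_lemma}.

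\textbf{Expected main difficulty.} Essentially none on the analytic side: Lemma \ref{sharp_lemma} has done all of the work. The only point that requires mild justification is the dyadic Lebesgue differentiation statement in Step 1; this is standard in a space of homogeneous type equipped with Christ-type cubes from Theorem \ref{thm:dyadic_systems}, and is precisely what lets us transfer an estimate for the ``dyadic'' localized maximal function $M_{Q_0}w$ into an estimate for $w$ itself on $Q_0$.
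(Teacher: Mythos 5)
Your Step~1 pointwise bound $w \le M_{Q_0}w$ a.e.\ on $Q_0$ is correct, but Step~2 has a genuine gap: Lemma~\ref{sharp_lemma} carries a factor of $[w]_\infty^\D$ on its right-hand side, and your argument has no mechanism to remove it. Applying the lemma directly to $\frac{1}{\mu(Q_0)}\int (M_{Q_0}w)^{1+\varepsilon}\,d\mu$ gives
\begin{align*}
  \fint_{Q_0} w^{1+\varepsilon}\,d\mu \;\le\; 2S^{1+\varepsilon}\,[w]_\infty^\D\left(\fint_{Q_0^*} w\,d\mu\right)^{1+\varepsilon},
\end{align*}
which, after taking $(1+\varepsilon)$-th roots, yields a reverse H\"older constant of order $\bigl(2[w]_\infty^\D\bigr)^{1/(1+\varepsilon)}S$. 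Since $\varepsilon$ is constrained to be $\le \frac{1}{2S^2K[w]_\infty^\D}$, this behaves like $[w]_\infty^\D$ when $[w]_\infty^\D$ is large; it is not a structural constant, and so the claimed $[w]_{RH_{1+\varepsilon}}^\D \lesssim 1$ does not follow. In the text of your proposal you write the constant as ``$2^{1/(1+\varepsilon)}S$'', silently dropping the $[w]_\infty^\D$; that is exactly the missing piece.

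The paper avoids this by only raising $\varepsilon$ (not $1+\varepsilon$) powers onto the maximal function: one starts from $w^{1+\varepsilon} \le (M_{Q_0}w)^\varepsilon w$ on $Q_0$, so that $\int_{Q_0} w^{1+\varepsilon}\,d\mu \le \int (M_{Q_0}w)^\varepsilon w\,d\mu$. A layer-cake decomposition of the right-hand side, combined with the stopping cubes of Lemma~\ref{lemma:A_infty_wRHI} and the good-$\lambda$-type bound $w(Q_j)\le S\lambda\mu(Q_j)$, produces
\begin{align*}
  (Sw_{Q_0^*})^\varepsilon w(Q_0^*) + \frac{\varepsilon SK}{1+\varepsilon}\int (M_{Q_0}w)^{1+\varepsilon}\,d\mu.
\end{align*}
Now Lemma~\ref{sharp_lemma} is applied only to the \emph{second} term, so its $[w]_\infty^\D$ appears \emph{multiplied by $\varepsilon$}; under the hypothesis $\varepsilon \le \frac{1}{2S^2K[w]_\infty^\D}$ the product $\varepsilon\, SK[w]_\infty^\D$ is $\lesssim 1$, and the extra factor is absorbed into the structural constant. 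Your direct approach would prove the qualitative statement $w\in RH_{1+\varepsilon}^\D$, but not the quantitative estimate with constant $2S^{1+\varepsilon}$ (nor the uniform bound $[w]_{RH_{1+\varepsilon}}^\D\lesssim 1$), which is what the theorem actually asserts.
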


\begin{proof}
  Notice that
  \begin{align*}
    \int_{Q_0} w^{1+\varepsilon} \, d\mu \ \le \ \int (M_{Q_0}w)^\varepsilon w \, d\mu.
  \end{align*}
  Let $\Omega_\lambda$ and $Q_j$ be as in the proof of previous lemma. Then, using similar arguments as earlier, we see that
  \begin{align*}
    \int_{Q_0} (M_{Q_0}w)^\varepsilon w \, d\mu \ &\le \ \int_0^{Sw_{Q_0^*}} \varepsilon\lambda^{\varepsilon-1} w(Q_0^*) \, d\lambda + \int_{Sw_{Q_0^*}}^\infty \varepsilon\lambda^{\varepsilon-1} w(\Omega_\lambda) \, d\lambda \\
                                                  &\le \ (Sw_{Q_0^*})^\varepsilon w(Q_0^*) + \varepsilon \int_{Sw_{Q_0^*}}^\infty \lambda^{\varepsilon-1} \sum_j w(Q_j) \, d\lambda \\
                                                  &\le \ (Sw_{Q_0^*})^\varepsilon w(Q_0^*) + \varepsilon S \int_{Sw_{Q_0^*}}^\infty \lambda^{\varepsilon} \sum_j \mu(Q_j) \, d\lambda \\
                                                  &\le \ (Sw_{Q_0^*})^\varepsilon w(Q_0^*) + \frac{\varepsilon SK}{1+\varepsilon} \int (M_{Q_0}w)^{1+\varepsilon} \, d\mu.
  \end{align*}
  Now averaging over $Q_0$ and using Lemma \ref{sharp_lemma} give us
  \begin{align*}
    \fint_{Q_0} w^{1+\varepsilon} \, d\mu \ &\le \ S^{1+\varepsilon} w_{Q_0^*}^{1+\varepsilon} + \frac{\varepsilon SK}{1+\varepsilon} \frac{1}{\mu(Q_0)} \int (M_{Q_0}w)^{1+\varepsilon} \, d\mu \\
                                            &\le \ S^{1+\varepsilon} w_{Q_0^*}^{1+\varepsilon} + \frac{2\varepsilon SK[w]_\infty^\D \cdot S^{1+\varepsilon}}{1+\varepsilon} w_{Q_0^*}^{1+\varepsilon} \\
                                            &\le \ 2S^{1+\varepsilon} \left( \fint_{Q_0^*} w \, d\mu \right)^{1+\varepsilon},
  \end{align*}
  which is what we wanted.
\end{proof}

\begin{corollary}[Weak Reverse Hölder Inequality for $A_\infty^\text{weak}$]
  Let $w \in A_\infty^\text{weak}$. Then for every $\sigma > \kappa$ there exists a constant $\alpha \coloneqq \alpha(D,\kappa,\sigma)$ such that
  for every $0 < \varepsilon \le \frac{1}{\alpha[w]_\infty^\sigma}$ we have
  \begin{align*}
    \left( \fint_B w^{1+\varepsilon} \, d\mu \right)^{\frac{1}{1+\varepsilon}} \ \lesssim_\sigma \ \fint_{\sigma B} w \, d\mu.
  \end{align*}
  In particular, $w \in RH_q^\text{weak}$ for every $q \in (1,1+\varepsilon]$, $\varepsilon = \varepsilon(w) > 1$, and $[w]_{RH_q}^\sigma \lesssim_\sigma 1$ for $\sigma > \kappa$. 
\end{corollary}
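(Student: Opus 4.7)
The plan is to deduce the corollary directly from the dyadic version, Theorem \ref{thm:wRHI_dyadic}, by transferring that result to balls through the equivalences already established in Section \ref{section:dyadic_weight_classes}. No new analytic estimate is required; the work is entirely bookkeeping of constants, so in a sense the corollary is the ``non-dyadic shadow'' of Theorem \ref{thm:wRHI_dyadic}.

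Fix $\sigma > \kappa$ and $w \in A_\infty^{\text{weak}} = A_\infty^\sigma$. First, Lemma \ref{lemma:A_infty_dyadic} gives $w \in A_\infty^\D$ and Lemma \ref{lemma:A_constants} provides the comparison $[w]_\infty^\D \lesssim_\sigma [w]_\infty^\sigma$. Applying Theorem \ref{thm:wRHI_dyadic}, I obtain that for every $0 < \varepsilon \le \frac{1}{2S^2 K [w]_\infty^\D}$ and every $Q_0 \in \D$,
\begin{align*}
\fint_{Q_0} w^{1+\varepsilon} \, d\mu \ \le \ 2S^{1+\varepsilon} \Big( \fint_{Q_0^*} w \, d\mu \Big)^{1+\varepsilon}.
\end{align*}
Absorbing the implicit constant from Lemma \ref{lemma:A_constants} into a structural constant $\alpha = \alpha(D,\kappa,\sigma)$, the admissible range $\varepsilon \le \tfrac{1}{\alpha[w]_\infty^\sigma}$ stated in the corollary is enough to guarantee that the displayed dyadic RHI holds.

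Next I would pass from cubes to balls along the lines of the proof of Lemma \ref{lemma:dyadicRHq}. Given a ball $B = B(x,r)$, Theorem \ref{thm:dyadic_systems}(2) furnishes a cube $Q_B \in \D$ with $B \subseteq Q_B$ and $\ell(Q_B) \eqsim r$, so that $\mu(B) \eqsim \mu(Q_B)$ by the doubling property and Theorem \ref{thm:dyadic_systems}(1)(iii). The generalized dyadic parent $Q_B^*$ is in turn contained in a concentric dilation $\sigma_0 B$ with $\sigma_0$ depending only on $\kappa$ and $D$. Combining these two inclusions with the dyadic RHI yields
\begin{align*}
\Big( \fint_B w^{1+\varepsilon} \, d\mu \Big)^{\frac{1}{1+\varepsilon}} \ \lesssim \ \fint_{\sigma_0 B} w \, d\mu.
\end{align*}

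If $\sigma \ge \sigma_0$ this already delivers the corollary, since $\fint_{\sigma_0 B} w \, d\mu \lesssim_\sigma \fint_{\sigma B} w \, d\mu$ by doubling. Otherwise I invoke Proposition \ref{prop:RH-constants}, or equivalently repeat the finite covering argument from the proof of Theorem \ref{thm:wRHI-classes}, to pass from $\sigma_0$ to the prescribed $\sigma$ at the cost of a constant depending on $\sigma$. The ``in particular'' clause is then immediate: Hölder's inequality transfers the inequality to every $q \in (1, 1+\varepsilon]$, and the bound $[w]_{RH_q}^\sigma \lesssim_\sigma 1$ is built into the estimate (alternatively via Lemma \ref{lemma:dyadic_RH_q_constants}). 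The only point requiring vigilance is that $\alpha$ must be permitted to depend on $\sigma$, because both Lemma \ref{lemma:A_constants} and the final passage from $\sigma_0$ to $\sigma$ introduce $\sigma$-dependent multiplicative constants; this is precisely the form of $\alpha$ stated in the corollary, so nothing is lost.
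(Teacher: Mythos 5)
Your proposal is correct and follows essentially the same route the paper takes: Lemma \ref{lemma:A_infty_dyadic} to pass into $A_\infty^\D$, Theorem \ref{thm:wRHI_dyadic} for the dyadic RHI, and Lemmas \ref{lemma:dyadicRHq}/\ref{lemma:dyadic_RH_q_constants} to translate back to balls and control $[w]_{RH_q}^\sigma$. You spell out the constant bookkeeping (in particular the role of Lemma \ref{lemma:A_constants} in fixing the admissible range of $\varepsilon$) more explicitly than the paper's one-line citation, but it is the same argument.
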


\begin{proof}
  The claim follows from previous results (Lemma \ref{lemma:A_infty_dyadic}, Theorem \ref{thm:wRHI_dyadic}, Lemma \ref{lemma:dyadicRHq} and Lemma \ref{lemma:dyadic_RH_q_constants}).
\end{proof}

We can now give an alternative characterization to the weak $A_\infty$ class using the previous result:

\begin{theorem}
  \label{thm:class_characterization}
  $w \in A_\infty^\text{weak}$ if and only if $w \in RH_q^\text{weak}$ for some $q > 1$.
\end{theorem}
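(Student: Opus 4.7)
The forward implication follows immediately from the preceding Corollary, so the task is to establish the converse: $RH_q^\text{weak} \subseteq A_\infty^\text{weak}$ for any fixed $q > 1$. My plan is to pass to the dyadic side using the characterizations of Section \ref{section:dyadic_weight_classes}. Given $w \in RH_q^\text{weak}$, Lemma \ref{lemma:dyadicRHq} yields $w \in RH_q^\D$, and by Lemma \ref{lemma:A_infty_dyadic} it then suffices to show $w \in A_\infty^\D$.

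Fix a cube $Q_0 \in \D$, a gdp $Q_0^*$, and a gdp $Q_0^{**}$ of $Q_0^*$. Since every $Q' \in \Q_{Q_0}$ is contained in $Q_0^*$, the function $M_{Q_0}w$ is supported in $Q_0^*$, and in fact $M_{Q_0}w = M_{Q_0}(1_{Q_0^*}w)$. The key chain of estimates I intend to carry out is
\begin{align*}
  \int M_{Q_0} w \, d\mu
    &\le \mu(Q_0^*)^{1/q'} \left( \int (M_{Q_0} w)^q \, d\mu \right)^{1/q} \\
    &\lesssim_q \mu(Q_0^*)^{1/q'} \left( \int_{Q_0^*} w^q \, d\mu \right)^{1/q} \\
    &\le [w]_{RH_q}^\D \, \mu(Q_0^*) \, w_{Q_0^{**}}
     \le [w]_{RH_q}^\D \, w(Q_0^{**}),
\end{align*}
where the first step is Hölder's inequality, the second uses $L^q$-boundedness of $M_{Q_0}$, and the third is the $RH_q^\D$ property applied to the cube $Q_0^*$ with its gdp $Q_0^{**}$ (the last inequality being $\mu(Q_0^*)w_{Q_0^{**}} \le w(Q_0^{**})$). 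By Remark \ref{remark:dyadic_A_infty}, this bound delivers $w \in A_\infty^\D$, as required.

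I expect the only non-routine point to be the $L^q$-boundedness of the localized operator $M_{Q_0}$, since its underlying cubes come from the $K$ adjacent dyadic systems $\D^t$, and so $M_{Q_0}$ is not itself a one-system dyadic maximal operator. My plan is to handle this via the pointwise bound $M_{Q_0} f \le \sum_{t=1}^K M_{Q_0}^t f$, where $M_{Q_0}^t$ is the analogous supremum restricted to cubes in $\Q_{Q_0} \cap \D^t$. Each $M_{Q_0}^t$ is dominated by the standard dyadic maximal operator associated to the single system $\D^t$ and is therefore bounded on $L^q$ with a constant depending only on $q$; summing over $t$ yields $\|M_{Q_0} f\|_q \lesssim_{K,q} \|f\|_q$ and closes the argument.
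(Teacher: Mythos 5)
Your argument is correct, but it takes a genuinely different route from the paper's. The paper proves the converse in one line by working directly with balls: it applies H\"older's inequality and the $L^q$-boundedness of the (non-dyadic) Hardy--Littlewood maximal operator to bound
\begin{align*}
  \fint_B M(1_Bw)\ud\mu \le \Big(\fint_B M(1_Bw)^q\ud\mu\Big)^{1/q}\lesssim_q\Big(\fint_B w^q\ud\mu\Big)^{1/q}\le[w]_{RH_q}^\sigma\fint_{\sigma B}w\ud\mu,
\end{align*}
which immediately gives $[w]_\infty^\sigma\lesssim_q[w]_{RH_q}^\sigma$ with no dyadic machinery at all. You instead route everything through the dyadic characterizations (Lemmas \ref{lemma:dyadicRHq} and \ref{lemma:A_infty_dyadic}) and then run an analogous H\"older plus $L^q$-boundedness chain for the localized operator $M_{Q_0}$, with a separate argument (via the pointwise bound $M_{Q_0}f\le\sum_{t=1}^K M_{Q_0}^t f$) to supply the $L^q$-boundedness that in the paper's proof is just the textbook fact about $M$ on a space of homogeneous type. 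All the individual steps check out: $M_{Q_0}w$ is indeed supported in $Q_0^*$ and equals $M_{Q_0}(1_{Q_0^*}w)$; the $RH_q^\D$ bound and $\mu(Q_0^*)\le\mu(Q_0^{**})$ give the last two inequalities; and invoking Remark \ref{remark:dyadic_A_infty} to absorb the replacement of a gdp by a second-level gdp is exactly what that remark is for. The trade-off is that your proof is longer and needs the dyadic equivalences plus the $K$-system decomposition of $M_{Q_0}$, whereas the paper gets away with the standard maximal inequality; on the other hand, your version is closer in spirit to the paper's dyadic philosophy and structurally parallels the proof of Lemma \ref{lemma:RHD-A_inftyD} (where a similar chain yields $w^q\in A_\infty^\D$), so if one has already set up the dyadic side it costs little extra. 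Both are valid proofs of the same statement.
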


\begin{proof}
  Let $w \in RH_q^\sigma$. Then by Hölder's inequality and the $L^q$-boundedness of the Hardy-Littlewood maximal function we have
  \begin{align*}
    \fint_B M(1_B w) \, d\mu \ \le \ \left( \fint_B M(1_B w)^q \, d\mu \right)^{1/q} \ \lesssim_q \ \left( \fint_B w^q \, d\mu \right)^{1/q} \ \le \ [w]_{RH_q}^\sigma \fint_{\sigma B} w \, d\mu,
  \end{align*}
  which gives us the claim.
\end{proof}

\begin{remark}
  \label{remark:strong_classes}
  Recall that Theorem \ref{thm:class_characterization} holds also for the strong weight classes in the Euclidean setting \cite{stein}.
  Although choosing $\sigma = 1$ in the previous proof gives us the ``$RH_q \Rightarrow A_\infty$'' type result,
  the other direction of the theorem does not hold for the strong weight classes in our setting. We will prove this in Section \ref{section:failure_gehring}.
\end{remark}

\section{Gehring's lemma for weak $RH_q$ classes}
\label{section:gehring_lemma}

The self-improving properties of different Reverse Hölder classes have been explored thoroughly by different authors during the last decades. The original Euclidean
``$RH_q \Rightarrow RH_{q+\varepsilon}$'' type result by F. Gehring \cite[Lemma 3]{gehring} has been generalized to different forms by e.g. 
A. Zatorska-Goldstein \cite[Theorem 3.3]{zatorska-goldstein} and O. Maasalo \cite[Theorem 3.1]{maasalo} (see also a recent preprint by F. Bernicot, T. Coulhon and D. Frey 
\cite[Theorem C.1]{bernicotetall}). We will show later in Section \ref{section:failure_gehring} that a strong ``$RH_q \Rightarrow RH_{q+\varepsilon}$'' Gehring's lemma 
does not hold in spaces of homogeneous type. It does, however, hold in doubling metric measure spaces that satisfy some additional geometrical properties 
(such as the $\alpha$-annular decay property \cite[Corollary 3.2]{maasalo}).

In this section we will show that the $RH_q^\text{weak}$ classes have a self-improving property in spaces of homogeneous type. The metric version of 
our result is actually a special case of \cite[Theorem 3.3]{zatorska-goldstein} (with the choices $f \equiv 0$, $\theta = 0$) but our proof 
is drastically different and considerably shorter: we do not need to rely on any additional decompositions since we can use the results we proved in Sections 
\ref{section:weak_weight_classes}, \ref{section:dyadic_weight_classes} and \ref{section:A_infty_wRHI}.

\begin{lemma}
  \label{lemma:RHD-A_inftyD}
  If $w \in RH_q^\D$, then $w^q \in A_\infty^\D$.
\end{lemma}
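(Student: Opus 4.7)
The plan is to reduce to the convenient sufficient condition for membership in $A_\infty^\D$ given by Remark \ref{remark:dyadic_A_infty}: namely, it suffices to find, for each $Q_0 \in \D$, a suitable enlargement $\widetilde Q \supseteq Q_0$ (a double gdp $Q_0^{**}$ will do) such that
\begin{equation*}
\int_{Q_0} M_{Q_0}(w^q)\, d\mu \; \lesssim \; w^q(Q_0^{**}).
\end{equation*}
This bypasses having to juggle the gdps that appear inside the supremum/infimum defining $[w^q]_\infty^\D$.

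To estimate $M_{Q_0}(w^q)(x)$ for $x \in Q_0$, I would plug the hypothesis $w \in RH_q^\D$ into the defining supremum: for every $Q \in \Q_{Q_0}^x$ there is a gdp $Q^*$ of $Q$ with $(w^q)_Q \le ([w]_{RH_q}^\D)^q (w_{Q^*})^q$. The crucial step is to control $w_{Q^*}$ by two basically different quantities according to the size of $Q$. When $\ell(Q) \le \delta^2\ell(Q_0)$ the gdp $Q^*$ still lies in $\Q_{Q_0}^x$, so $w_{Q^*} \le M_{Q_0}w(x)$. When $\ell(Q) \in \{\ell(Q_0),\delta\ell(Q_0)\}$ the gdp $Q^*$ is one level above $Q_0$; using the nesting property in the definition of a gdp applied to $Q_0$ and then to $Q_0^*$, one verifies that $Q^* \subseteq Q_0^{**}$, and since $\mu(Q^*) \gtrsim \mu(Q_0)$ by doubling, $w_{Q^*} \lesssim w(Q_0^{**})/\mu(Q_0)$. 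Taking the supremum over $Q$ gives the pointwise bound
\begin{equation*}
M_{Q_0}(w^q)(x) \;\lesssim\; (M_{Q_0}w(x))^q + \Bigl(\frac{w(Q_0^{**})}{\mu(Q_0)}\Bigr)^q, \qquad x \in Q_0.
\end{equation*}

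Integrating over $Q_0$, the first term is handled by the $L^q$-boundedness of the dyadic maximal operator $M_{Q_0}$ (which splits into a sum of $K$ genuine dyadic maximal operators, one per system $\D^t$, each of which is bounded on $L^q$ for $q>1$); since $M_{Q_0}w = M_{Q_0}(w\mathbf{1}_{Q_0^*})$, this yields $\int (M_{Q_0}w)^q\, d\mu \lesssim_q w^q(Q_0^*) \le w^q(Q_0^{**})$. The second term contributes $\mu(Q_0)^{1-q} w(Q_0^{**})^q$, and Jensen's inequality together with the doubling of $\mu$ give $w(Q_0^{**})^q \le w^q(Q_0^{**})\mu(Q_0^{**})^{q-1} \lesssim w^q(Q_0^{**})\mu(Q_0)^{q-1}$, absorbing the unwanted factor.

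The main obstacle I expect is the bookkeeping in the case analysis above: a gdp $Q^*$ of a cube $Q \in \Q_{Q_0}$ whose size is close to $\ell(Q_0)$ genuinely escapes $Q_0^*$, and one must verify that it is still captured by $Q_0^{**}$ (which is why the conclusion is phrased using the iterated gdp and Remark \ref{remark:dyadic_A_infty} rather than the one-gdp formulation). The rest is a routine layer-cake/boundedness argument.
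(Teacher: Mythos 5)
Your proof is correct, and it reaches the same conclusion by essentially the same means (bound $M_{Q_0}(w^q)$ pointwise via the $RH_q^\D$ hypothesis, then integrate and invoke $L^q$-boundedness and doubling, landing in the Remark~\ref{remark:dyadic_A_infty} form of membership). The one place where your route differs from the paper's is in how the ``escaping'' gdps are handled. You keep $M_{Q_0}w$ as the dominating object and therefore need a two-case split at the pointwise level, with a separate term $\bigl(w(Q_0^{**})/\mu(Q_0)\bigr)^q$ for the large cubes whose gdp leaves $\Q_{Q_0}$; this term is then absorbed by Jensen and doubling. The paper instead uses the observation, already recorded just after the definition of $\Q_Q$, that $Q\in\Q_{Q_0}$ implies $Q^*\in\Q_{Q_0^*}$, and so passes directly to the single uniform pointwise bound $M_{Q_0}(w^q)(x)\le ([w]_{RH_q}^\D)^q\bigl(M_{Q_0^*}w(x)\bigr)^q$ — the large cubes are simply swallowed by enlarging the localizing cube from $Q_0$ to $Q_0^*$, and the $L^q$-boundedness of the Hardy--Littlewood maximal operator (applied to $1_{Q_0^{**}}w$) then gives $\lesssim w^q(Q_0^{**})$ in one stroke, with no case distinction and no Jensen step. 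Both arguments give the same quantitative output $[w^q]_\infty^\D\lesssim_q([w]_{RH_q}^\D)^q$, which is what is used later in the Gehring lemma; yours is a touch more work, but the bookkeeping you flagged as the main obstacle is exactly what the paper's one-line ``shift to $M_{Q_0^*}$'' avoids.
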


\begin{proof}[Proof of Lemma \ref{lemma:RHD-A_inftyD}]
  Let $Q_0 \in \D$ be fixed. Then for every $x \in \bigcup_{Q \in \Q_{Q_0}} Q$ we have
  \begin{align*}
    M_{Q_0}w^q(x) \ = \ \sup_{Q \in \Q_{Q_0}^x} (w^q)_Q 
                  \ \le \ ([w]_{RH_q}^\D)^q \cdot \sup_{Q \in \Q_{Q_0}^x} (w_{Q^*})^q 
                  \ \le \ ([w]_{RH_q}^\D)^q \cdot (M_{Q_0^*}w(x))^q.
  \end{align*}
  Thus,
  \begin{align*}
    \int M_{Q_0}w^q \, d\mu \ \le \ ([w]_{RH_q}^\D)^q \int (M_{Q_0^*}w)^q \, d\mu
                            \ &= \ ([w]_{RH_q}^\D)^q \int_{Q_0} (M_{Q_0^*} (1_{Q_0^{**}} w))^q \, d\mu \\
                              &\lesssim \ ([w]_{RH_q}^\D)^q \int_{Q_0} (M(1_{Q_0^{**}} w))^q \, d\mu \\
                              &\le \ C_q ([w]_{RH_q}^\D)^q w^q(Q_0^{**}),
  \end{align*}
  where the last step used the $L^q$-boundedness of the Hardy-Littlewood maximal function. Thus, $w^q \in A_\infty^\D$ by Remark \ref{remark:dyadic_A_infty}.
\end{proof}

\begin{proposition}
  If $w \in RH_q^\text{weak}$, then there exists a constant $\beta \coloneqq \beta(D,\kappa,q)$ such that we have $w \in RH_{q+\varepsilon}^\text{weak}$ for every $0 \le \varepsilon \le \frac{1}{\beta \cdot ([w]_{RH_q}^\D)^q}$.
\end{proposition}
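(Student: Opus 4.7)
The plan is to chain together three already-established results in a clean way: first convert $w \in RH_q^{\text{weak}}$ to the dyadic class $RH_q^\D$ via Lemma \ref{lemma:dyadicRHq}, then pass to $w^q \in A_\infty^\D$ via Lemma \ref{lemma:RHD-A_inftyD}, then apply the dyadic weak RHI (Theorem \ref{thm:wRHI_dyadic}) to the weight $w^q$, and finally unpack the conclusion to read off an improved weak RHI exponent for $w$ itself.

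Concretely, I would start by assuming, without loss of generality via Lemma \ref{lemma:dyadicRHq} and Lemma \ref{lemma:dyadic_RH_q_constants}, that $w \in RH_q^\D$. Lemma \ref{lemma:RHD-A_inftyD} then gives $w^q \in A_\infty^\D$, and an inspection of its proof yields the explicit quantitative estimate $[w^q]_\infty^\D \le C_q \cdot ([w]_{RH_q}^\D)^q$, where $C_q$ absorbs the $L^q$-bound of the Hardy–Littlewood maximal operator and structural constants. Theorem \ref{thm:wRHI_dyadic} applied to $w^q$ then produces, for any $0 < \eta \le \frac{1}{2S^2 K [w^q]_\infty^\D}$ and every $Q_0 \in \D$,
\begin{align*}
  \fint_{Q_0} w^{q(1+\eta)} \, d\mu \ \le \ 2S^{1+\eta} \left( \fint_{Q_0^*} w^q \, d\mu \right)^{1+\eta}.
\end{align*}

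Next I would raise both sides to the power $1/(q(1+\eta))$ and apply the $RH_q^\D$ property of $w$ on the right to bound $\left(\fint_{Q_0^*} w^q\right)^{1/q} \le [w]_{RH_q}^\D \fint_{Q_0^{**}} w \, d\mu$. Setting $\varepsilon \coloneqq q\eta$, this gives a weak $(q+\varepsilon)$-Reverse Hölder estimate for $w$, albeit with the enlargement $Q_0^{**}$ in place of a single gdp. Combining the bound on $\eta$ with the estimate on $[w^q]_\infty^\D$ above yields the desired range $0 \le \varepsilon \le \frac{1}{\beta ([w]_{RH_q}^\D)^q}$ for $\beta \coloneqq 2 S^2 K C_q / q$, which depends only on $D$, $\kappa$ and $q$.

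The last step is to translate this dyadic estimate back into the metric weak Reverse Hölder property. Since the inequality holds for every $Q_0 \in \D$ with enlargement $Q_0^{**}$, and since for every ball $B$ we can select a cube $Q_B \in \D$ with $B \subseteq Q_B$ and comparable measure (Theorem \ref{thm:dyadic_systems}), while $Q_B^{**}$ is itself contained in a fixed dilate $\sigma B$ for a structural $\sigma > \kappa$, we conclude $w \in RH_{q+\varepsilon}^\sigma = RH_{q+\varepsilon}^{\text{weak}}$ exactly as in the proof of Lemma \ref{lemma:dyadicRHq}. The main obstacle is a bookkeeping one: tracking that $\beta$ depends only on $D$, $\kappa$, $q$, and in particular that the passage from a gdp $Q_0^*$ to the double-enlargement $Q_0^{**}$ only costs a structural factor absorbed into the constants, not into the range of admissible $\varepsilon$.
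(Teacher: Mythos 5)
Your proposal is correct and follows essentially the same route as the paper's own proof: reduce to the dyadic class $RH_q^\D$, apply Lemma \ref{lemma:RHD-A_inftyD} to get $w^q \in A_\infty^\D$ with the quantitative bound $[w^q]_\infty^\D \le C_q([w]_{RH_q}^\D)^q$, invoke Theorem \ref{thm:wRHI_dyadic} for $w^q$, unwind by raising to the power $1/(q(1+\eta))$ and applying $RH_q^\D$ on the right, and translate back to balls via $Q_B^{**} \subseteq \sigma B$. Your bookkeeping of $\beta$ and of the harmless passage from $Q_0^*$ to $Q_0^{**}$ matches the paper's argument precisely.
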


\begin{proof}
  Let $\sigma > \kappa$ so large that $Q^{**}_B \subseteq \sigma B$ for every ball $B$ and let $w \in RH_q^\sigma$. Then by Lemma \ref{lemma:dyadicRHq}
  and Lemma \ref{lemma:RHD-A_inftyD} we know that $w^q \in A_\infty^\D$. Thus, by Theorem \ref{thm:wRHI_dyadic}, we know that 
  for every $0 \le \tilde{\varepsilon} \le \frac{1}{2S^2 K[w^q]_\infty^\D}$ and for every $r \in [1,1+\tilde{\varepsilon}]$ we have
  $w^q \in RH_r^\D$. Hence, for every ball $B$ it holds that
  \begin{align*}
    \left( \fint_B w^{qr} \, d\mu \right)^{1/qr} \ \lesssim_{q,r} \ \left( \fint_{Q_B} w^{qr} \, d\mu \right)^{1/qr} 
                                                 \ &\le \ ([w^q]_{RH_r}^\D)^{1/q} \left( \fint_{Q_B^*} w^q \, d\mu \right)^r \\
                                                   &\le \ ([w^q]_{RH_r}^\D)^{1/q} [w]_{RH_q}^\D \fint_{Q_B^{**}} w \, d\mu \\
                                                   &\lesssim \ ([w^q]_{RH_r}^\D)^{1/q} [w]_{RH_q}^\D \fint_{\sigma B} w \, d\mu.
  \end{align*}
  By the proof of Lemma \ref{lemma:RHD-A_inftyD}, we know that $[w^q]_\infty^\D \le C_q ([w]_{RH_q}^\D)^q$. Thus, there exists a constant
  $\beta \coloneqq \beta(D,\kappa,q)$ such that $q \le qr \le q+\frac{1}{\beta\cdot([w]_{RH_q}^\D)^q}$. In particular, we have 
  $w \in RH_{q + \varepsilon}^\text{weak}$ for every $0 \le \varepsilon \le \frac{1}{\beta\cdot([w]_{RH_q}^\D)^q}$.
\end{proof}

\section{Failure of strong results}
\label{section:failure_gehring}

In this section we show that the results we presented in earlier sections are essentially the best kind we can hope for. In other words, 
we will show that both an ``$A_\infty \Rightarrow \text{RHI}$'' type theorem and a strong ``$RH_q \Rightarrow RH_{q+\varepsilon}$'' 
type Gehring's lemma are out of reach in general spaces of homogeneous type, even in a purely metric case. We do this by constructing a doubling metric
measure space in which some functions fail the properties we mentioned.

\subsection{Construction of the space and some of the functions}

Consider $\R^2$ with the $\ell^{\infty}$ metric, so that balls are actually squares. We define $X$ as a subset of $\R^2$ consisting of an infinite line with finite 
line-segments attached. Let
\begin{equation*}
  A \coloneqq \{(u,0):u\in\R\},\quad
  U \coloneqq \{(u,\frac12u):u\in(0,1]\},\quad
  V \coloneqq \{(1,v):v\in[\frac12,1]\},\quad
  W \coloneqq U\cup V.
\end{equation*}
We take $X \coloneqq A\cup\bigcup_{j\in\N}W_j$ with the $\ell^{\infty}$ metric and the arc-length measure, where $W_j:=W+(10j,0) \eqqcolon U_j \cup V_j$. This is an Ahlfors 
$1$-regular metric measure space.

The reason for using this particular space is that for suitable functions we only need to test the properties we mentioned earlier for couple different types of balls. We 
will use functions constructed in the following way. Let $\varepsilon_j \to 0^+$, $\varepsilon_j \le 1$, and let $h$ be a positive function defined on the interval $(0,1)$.
Let us then set $g(t) \coloneqq \max\{h(t),1\}$ and define the function $f \coloneqq f_h \colon X \to \R_+$ by setting
\begin{align*}
  f(x) \coloneqq \left\{ \begin{array}{cl}
                           1, & \text{if } x \in A \\
                           \varepsilon_j, & \text{if } x \in V_j \\
                           \min\{1,\varepsilon_j g(u)\}, & \text{if } x = (10j + u,\frac{1}{2}u) \in U_j
                         \end{array} \right. .
\end{align*}
Notice that $f \le 1$ everywhere.

\subsection{Failure of strong Reverse Hölder property of $A_\infty$ weights}
\label{subsection:failure_of_RHI}

Let $h(t) \coloneqq t^{-1} \log^{-3}(e/t)$. Then $h \in L^1(0,1)$ but $h \notin L^p(0,1)$ for any $p > 1$. 
We will show that now $f = f_h \in A_\infty$ but $f \notin RH_p$ for any $p > 1$.

Let us test the $A_\infty$ condition for different squares $Q \coloneqq Q(x,r) \coloneqq (x_1-r,x_1+r) \times (x_2-r,x_2+r)$. Let use first assume that $Q \cap A \neq \emptyset$. Then 
$Q \cap A$ is a line-segment of length $\mu(Q \cap A) = 2r \ge c\mu(Q)$. Thus,
\begin{align*}
  \int_Q M(1_Q f) \, d\mu \ \le \ \mu(Q) \le \frac{1}{c} \mu(Q \cap A) \ \le \ \frac{1}{c} f(Q),
\end{align*}
since $f(x) \le 1$ for every $x \in X$ and $f = 1$ on $A$.

From now on, we consider only squares which do not meet $A$. Then $x \in W_j$ for some $j$ and without loss of generality we may consider $j = 0$ and $\varepsilon_0 = \varepsilon$.
Let first $x = (u,\frac{1}{2}u) \in U$. Then $r \le \frac{1}{2}u$ and thus $(u+r)/(u-r) \le 3$. Now
\begin{align*}
  \sup_{x \in Q} f(x) \ &= \ \min\left\{1, \sup_{t \in (u-r,\min\{u+r,1\})} \varepsilon g(t)\right\}, \\
  \inf_{x \in Q} f(x) \ &= \ \min\left\{1, \inf_{t \in (u-r,\min\{u+r,1\})} \varepsilon g(t)\right\}.
\end{align*}
In particular, $\sup_{x \in Q} f(x) / \inf_{x \in Q} f(x) \le 3$. Hence,
\begin{align*}
  \int_Q M(1_Q f) \, d\mu \ \le \ \mu(Q) \cdot \sup_{x \in Q} f(x) \ \le \ 3\mu(Q) \cdot \inf_{x \in Q} f(x) \ \le \ 3 f(Q).
\end{align*}

Finally, let $ x = (1,v) \in V$. Since $Q \cap A = \emptyset$, $r \le v \le 1$. If $r \le \frac{1}{2}$, then $\sup_Q f$ and $\inf_Q f$ have a ratio of at most $2$ and
the previous consideration applies. Let then $r \in (\frac{1}{2},1]$ and $s_0 \in (0,1)$ be the point such that $h(s_0) = 1$ and $u_\varepsilon \in (0,1)$ the point 
such that $h(u_\varepsilon) = 1/\varepsilon$. We may assume that $1-r \le u_\varepsilon$ since the other cases can be generalized 
easily from this case. Then
\begin{align*}
  \int_Q f \, d\mu \ &\eqsim \ \int_{1-r}^1 \min\{1, \max\{\varepsilon,\varepsilon h(t)\}\} \, dt + \int_{1/2}^1 \varepsilon \, dt \\
                     &= \        \int_{1-r}^{u_\varepsilon} 1 \, dt + \int_{u_\varepsilon}^{s_0} \varepsilon h(t) \, dt + \int_{s_0}^1 \varepsilon \, dt + \int_{1/2}^1 \varepsilon \, dt \\
                     &\eqqcolon \ I_1 + I_2 + I_3 + I_4.
\end{align*}
Notice that the function $t \mapsto \max\{\varepsilon,\varepsilon h(t)\}$ is descending and $h,Mh \in L^1(0,1)$. Thus, by elementary calculations, we have 
\begin{align*}
  \int_Q M(1_Qf) \, d\mu \ \lesssim \ J_1 + J_2 + J_3 + J_4
\end{align*}
for such $J_i$ that $J_i \le C \cdot I_i$ for some constant $C \ge 1$ independent of $\varepsilon$ and $Q$ and every
$i = 1,2,3,4$. In particular, $\int_Q M(1_Qf) \, d\mu \lesssim \int_Q f \, d\mu$ and $f \in A_\infty$.

Let us then show that $f \notin RH_p$ for any $p > 1$. Consider the particular square $Q_j$ of centre $(10j+1,1)$ and radius $1$. Thus in fact $Q_j=W_j=U_j\cup V_j$. Then
\begin{equation*}
  \fint_{Q_j} f^p\ud\mu\geq c\int_{U_j} f^p\ud\mu=c\int_0^1\eps_j^p\min\{\frac{1}{\eps_j},g\}^p\ud u,
\end{equation*}
whereas
\begin{equation*}
  \fint_{Q_j} f\ud\mu\leq C\Big(\int_0^1 \eps_j g\ud u+\int_{1/2}^1\eps_j\ud v\Big)\leq C\eps_j, 
\end{equation*}
so that
\begin{equation*}
  \Big(\fint_{Q_j} f^p\ud\mu\Big)^{1/p}\Big/\Big(\fint_{Q_j} f\ud\mu\Big)
  \geq c\Big(\int_0^1\min\{\frac{1}{\eps_j},g\}^p\ud u\Big)^{1/p}.
\end{equation*}
As $\eps_j\to 0^+$, the right side tends to $c\big(\int_0^1 g^p\ud u\big)^{1/p}=\infty$ by monotone convergence. Thus, $f \notin RH_p$ for any $p > 1$.

\subsection{Failure of strong Gehring's lemma}

Let $h(t):=t^{-\alpha}\log^{-1}(e/t)$ with some $0<\alpha<1$. Then $h\in L^p(0,1)$ if and only if $p\leq 1/\alpha$. This function obviously does not belong to $RH_{1/\alpha}$ on $[0,1]$ with the Lebesgue measure, as this would contradict the classical Gehring lemma (and it is also easy to check this directly).
We claim that now $f = f_h \in RH_p$ if and only if $p\leq 1/\alpha$.

Our strategy is the same as in previous subsection and thus, we only need to test the $RH_p$ condition for three different types of squares $Q \coloneqq Q(x,r)$.
The cases $Q \cap A \neq \emptyset$, $x \in U_j$ and $x \in V_j$, $r \le 1/2$, can be checked similarly as earlier.

Let $x=(1,v)\in V$, $Q\cap A=\varnothing$ and $r \in (1/2,1]$. Now $Q$ contains all of $V$ and $Q\subseteq U\cup V$. Hence, for $p\leq 1/\alpha$ we have
\begin{align*}
  \Big(\fint_Q f^p\ud\mu\Big)^{1/p}
  &\leq C\Big(\int_Q f^p\ud\mu\Big)^{1/p}\leq C\Big(\int_{U\cup V}f^p\ud\mu\Big)^{1/p}
  \leq C\Big(\int_0^1 (\eps g)^p\ud u+\int_{1/2}^1\eps^p\ud v\Big)^{1/p} \\
  &\leq C\eps\leq C\int_{1/2}^1\eps\ud v
   =C\int_V f\ud\mu\leq C\int_Q f\ud\mu\leq C\fint_Q f\ud\mu,
\end{align*}
where we interchanged twice between $\int_Q$ and $\fint_Q$ by the fact that $r\in(\frac12,1]$ and the Ahlfors-regularity of $\mu$, and we used the fact that $\int_0^1 g^p\ud u\leq C$ for $p\leq 1/\alpha$. 
Thus, $f \in RH_p$ for $p\leq 1/\alpha$.

The proof for the failure of the $RH_p$ property for $p > 1/\alpha$ is almost identical to the proof of the failure 
of the $RH_p$ property for $p > 1$ in the previous subsection.

\begin{remark}
By using geometrically decreasing copies of $W$ instead of simple translates, we could have arranged the counterexample inside a compact set, if desired.

Moreover, if we interpreted the $RH_p$ property in an extended sense, by still requiring that $(\fint_B f^p\ud\mu)^{1/p}\leq C\fint_B f\ud\mu$ hold for all balls, but possibly with both sides equal to $\infty$, then we could simply take $X=A\cup W$ and $f(x)=\infty$ for $x\in A$, $f(u,\frac12u)=g(u)$ for $(u,\frac12u)\in U$, and $f(x)=1$ for $x\in V$.
\end{remark}

\section{Equivalence of different definitions}
\label{section:equivalent_definitions}

Like we mentioned earlier, there are numerous different definitions for the $A_\infty$ class in the Euclidean setting but 
these definitions are not equivalent in general spaces of homogeneous type. Hence, in this context, it is important 
to be specific about which definition is being used for the $A_\infty$ class. However, some weakened definitions are 
equivalent also in our generality. Previously, the following definition for weak $A_\infty$ weights has appeared in some articles related to analysis 
in $\R^n$ for $\sigma = 2$ \cite{hofmannmartell, sawyer}:
\begin{defin}
  \label{definition:old_A_infty_class}
  Let $\sigma \ge 1$ and let $w$ be a weight. We denote $w \in \A_\infty^\sigma$ if there exists constants $C > 0$ and $p \ge 1$ such that 
  for every ball $B$ and every measurable set $E \subseteq B$ we have
  \begin{align*}
    \frac{w(E)}{w(\sigma B)} \le C \left( \frac{\mu(E)}{\mu(B)} \right)^{1 / p}.
  \end{align*}
\end{defin}
It is straightforward to show that the class $\A_\infty^\sigma$ contains the same functions as the class $A_\infty^\sigma$ if $\sigma > \kappa$. We show this by using 
the techniques from the proof of \cite[Chapter I, Lemma 12]{strombergtorchinsky} to show that $w \in \A_\infty^\sigma$ if and only if 
$w \in RH_q^\sigma$ for some $q > 1$, which gives us the claim by Theorem \ref{thm:class_characterization}. 
However, in the case $\sigma = 1$, this result does not hold. Since $w \in \A_\infty \coloneqq \A_\infty^1$ if and only if $w \in RH_q$ for some $q > 1$ by 
\cite[Chapter I, Lemma 12]{strombergtorchinsky}, the Fujii-Wilson $A_\infty$ condition is strictly weaker than the 
$\A_\infty$ condition by Remark \ref{remark:strong_classes} and Section \ref{subsection:failure_of_RHI} 

\begin{lemma}
  \label{lemma:same_classes}
  $\A_\infty^\sigma = A_\infty^\sigma = \A_\infty^{\sigma'}$ for every $\sigma, \sigma' > \kappa$.
\end{lemma}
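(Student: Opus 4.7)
The plan is to prove the non-trivial claim $\A_\infty^\sigma = A_\infty^\sigma$ for each fixed $\sigma > \kappa$; the additional identification $\A_\infty^\sigma = \A_\infty^{\sigma'}$ then comes for free, because the right-hand side is $\sigma$-independent by Theorem~\ref{thm:A-infty-classes}. My strategy is to establish the auxiliary characterization $\A_\infty^\sigma = \bigcup_{q>1} RH_q^\sigma$, so that Theorem~\ref{thm:class_characterization} can be used to close the loop. The two inclusions are of quite different flavour: one is a one-line application of Hölder's inequality, the other is a distributional argument modelled on \cite[Chapter~I, Lemma~12]{strombergtorchinsky}.

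For the easy direction $RH_q^\sigma \subseteq \A_\infty^\sigma$, I would fix $E \subseteq B$ and chain Hölder's inequality, the weak RHI and the trivial bound $\mu(B) \le \mu(\sigma B)$:
\begin{align*}
  w(E) \ \le \ \mu(E)^{1/q'} \Bigl( \int_B w^q \, d\mu \Bigr)^{1/q} \ \le \ [w]_{RH_q}^\sigma \, \mu(E)^{1/q'} \mu(B)^{1/q} \fint_{\sigma B} w \, d\mu.
\end{align*}
Dividing by $w(\sigma B) = \mu(\sigma B) \fint_{\sigma B} w \, d\mu$ and discarding the factor $\mu(B)/\mu(\sigma B) \le 1$ yields $w(E)/w(\sigma B) \le [w]_{RH_q}^\sigma (\mu(E)/\mu(B))^{1/q'}$, i.e.\ $w \in \A_\infty^\sigma$ with exponent $p = q'$.

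The substantive work is the converse $\A_\infty^\sigma \subseteq RH_q^\sigma$ for some $q > 1$. Fix a ball $B$, set $\alpha \coloneqq \fint_{\sigma B} w \, d\mu$, and for $\lambda > \alpha$ let $E_\lambda \coloneqq \{x \in B : w(x) > \lambda\}$. Combining Chebyshev's inequality $\lambda \mu(E_\lambda) \le w(E_\lambda)$ with the hypothesis $w(E_\lambda) \le C (\mu(E_\lambda)/\mu(B))^{1/p} w(\sigma B)$ and solving for $\mu(E_\lambda)$ produces the decay
\begin{align*}
  \mu(E_\lambda) \ \lesssim \ \alpha^{p'} \mu(B) \lambda^{-p'}, \qquad \lambda > \alpha,
\end{align*}
where $p' = p/(p-1)$ when $p > 1$ (the case $p = 1$ forcing $w$ to be essentially bounded by a multiple of $\alpha$ on $B$, which is even better). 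Splitting the layer-cake identity
\begin{align*}
  \int_B w^q \, d\mu \ = \ q \int_0^\alpha \lambda^{q-1} \mu(E_\lambda) \, d\lambda \ + \ q \int_\alpha^\infty \lambda^{q-1} \mu(E_\lambda) \, d\lambda
\end{align*}
at $\lambda = \alpha$ and plugging in the decay in the tail integral yields $\fint_B w^q \, d\mu \lesssim_q \alpha^q$ for every $q \in (1, p')$, which is exactly $w \in RH_q^\sigma$.

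Feeding these two inclusions into Theorem~\ref{thm:class_characterization} identifies $\A_\infty^\sigma$ with $A_\infty^\text{weak}$, which in turn coincides with $A_\infty^\sigma$ for every $\sigma > \kappa$ by Theorem~\ref{thm:A-infty-classes}, completing the proof. The only non-routine piece I expect is bookkeeping in the distributional step: one must verify that the $\sigma$-dilation present in the hypothesis survives intact on the right-hand side of the final RHI; however, the dilation is carried throughout by the single scalar $\alpha$, so it passes through the layer-cake computation without deterioration.
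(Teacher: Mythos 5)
Your proof is correct and follows essentially the same route as the paper: both directions establish the identification $\A_\infty^\sigma = \bigcup_{q>1} RH_q^\sigma$ (Hölder for the easy inclusion, the Strömberg--Torchinsky layer-cake argument for the converse) and then invoke Theorems~\ref{thm:class_characterization} and~\ref{thm:A-infty-classes} to pass to $A_\infty^\sigma$. The only cosmetic deviations are your choice of split point $\lambda=\alpha$ in place of the paper's $Cw(\sigma B)/\mu(B)$ (comparable up to a doubling factor) and your explicit disposal of the $p=1$ edge case, which the paper tacitly assumes away.
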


\begin{proof}
  Suppose that $w \in A_\infty^\sigma$. Let us fix a ball $B$ and a measurable set $F \subseteq B$. Now, by Theorem \ref{thm:class_characterization}, we know that $w \in RH_q^\sigma$ for some $q > 1$. Thus,
  \begin{align*}
    w(F) \, \le \, \mu(B) \left( \fint_B w^q \, d\mu \right)^{1/q} \left( \fint_B 1_F \, d\mu \right)^{1/q'}
         \, \le \, [w]_{RH_q}^\sigma \frac{\mu(B)}{\mu(\sigma B)} w(\sigma B) \left( \frac{\mu(F)}{\mu(B)} \right)^{1/q'}.
  \end{align*}
  In particular, $w \in \A_\infty^\sigma$.
  
  Suppose then that $w \in \A_\infty^\sigma$. Let us fix a ball $B$ and denote 
  $E_\lambda \coloneqq \{x \in B \colon w(x) > \lambda\}$ for every $\lambda \ge 0$. Then we have
  \begin{align*}
    \mu(E_\lambda) \le \frac{1}{\lambda} w(E_\lambda) \le \frac{C}{\lambda} \left( \frac{\mu(E_\lambda)}{\mu(B)} \right)^{1/p} w(\sigma B),
  \end{align*}
  which gives us
  \begin{align}
    \label{e_lambda_estimate} \mu(E_\lambda) \le \min\left\{ \mu(B), C^{p'} \frac{w(\sigma B)^{p'}}{\lambda^{p'} \mu(B)^{1 / (p-1)}} \right\}.
  \end{align}
  Suppose then that $r \in [1,1 + 1/p)$ and write
  \begin{align*}
    \int_B w^r \, d\mu \, = \, r \int_0^{C w(\sigma B) / \mu(B)} \lambda^{r-1} \mu(E_\lambda) \, d\lambda
                           + r \int_{C w(\sigma B) / \mu(B)}^\infty \lambda^{r-1} \mu(E_\lambda) \, d\lambda
                       \, \eqqcolon \, I + II.
  \end{align*}
  Since $r -p' < 0$, the estimate \eqref{e_lambda_estimate} gives us
  \begin{align*}
     I \le C^r w(\sigma B)^r \mu(B)^{1-r}
  \end{align*}
  and
  \begin{align*}
    II \le C^{p'} \cdot \frac{w(\sigma B)^{p'}}{\mu(B)^{p' / p}} \cdot \frac{r}{r - p'} \cdot \left(- \frac{C^{r - p'} \omega(\sigma B)^{r - p'}}{\mu(B)^{r-p'}} \right) 
       = C^r \frac{r}{p' - r} w(\sigma B)^{r} \mu(B)^{1-r}.
  \end{align*}
  Thus, since $p' - r \ge 1 / (p(p-1))$, we have
  \begin{align*}
    I + II \le \left( \frac{r}{p' -r} + 1 \right) C^r w(\sigma B)^r \mu(B)^{1-r}
           \le p^2 C^r w(\sigma B)^r \mu(B)^{1-r}.
  \end{align*}
  In particular,
  \begin{align*}
    \left( \fint_B w^r \, d\mu \right)^{1/r} \le p^{2 / r} C \frac{1}{\mu(B)} \int_{\sigma B} w \, d\mu \le p^2 C D^{\log_2 \sigma + 1} \fint_{\sigma B} w \, d\mu,
  \end{align*}
  so by Theorem \ref{thm:class_characterization} we know that $w \in A_\infty^\sigma$.
  
  Hence, $A_\infty^\sigma = \A_\infty^\sigma$ for every $\sigma > \kappa$ and the claim follows from Theorem \ref{thm:A-infty-classes}.
\end{proof}

Thus, it is natural to set $w \in \A_\infty^\text{weak}$ if $w \in \A_\infty^\sigma$ for some $\sigma > \kappa$. Lemma \ref{lemma:same_classes} gives us now the following expansion 
of Theorem \ref{thm:class_characterization}:

\begin{theorem}
  The following three conditions are equivalent:
  \begin{enumerate}
    \item[1)] $w \in A_\infty^\text{weak}$
    \item[2)] $w \in \A_\infty^\text{weak}$
    \item[3)] $w \in RH_q^\text{weak}$ for some $q > 1$.
  \end{enumerate}
\end{theorem}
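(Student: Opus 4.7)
The theorem is essentially a packaging of the two results that immediately precede it, so the plan is simply to assemble them. The equivalence (1) $\Leftrightarrow$ (3) is exactly the content of Theorem \ref{thm:class_characterization} once one unwinds the definition of $A_\infty^\text{weak}$ and $RH_q^\text{weak}$ as ``belonging to the $\sigma$-version for some $\sigma > \kappa$''. So the only real content to add is (1) $\Leftrightarrow$ (2).

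For this, I would invoke Lemma \ref{lemma:same_classes}, which states $\A_\infty^\sigma = A_\infty^\sigma$ for every $\sigma > \kappa$. By definition, $w \in A_\infty^\text{weak}$ means $w \in A_\infty^\sigma$ for some $\sigma > \kappa$, and, following the natural convention announced just before the statement, $w \in \A_\infty^\text{weak}$ means $w \in \A_\infty^\sigma$ for some $\sigma > \kappa$. Since the two classes coincide for each such $\sigma$, the ``there exists $\sigma > \kappa$'' quantifiers match up and (1) $\Leftrightarrow$ (2) follows.

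Combining the two equivalences then yields the theorem. There is no substantial obstacle here, since all the work has already been done: Theorem \ref{thm:class_characterization} rests on the weak Reverse Hölder inequality for $A_\infty^\text{weak}$ weights (Corollary after Theorem \ref{thm:wRHI_dyadic}) together with the easy $L^q$-boundedness argument, while Lemma \ref{lemma:same_classes} uses the Strömberg--Torchinsky-type level-set argument on one side and $RH_q \Rightarrow \A_\infty$ via Hölder on the other. The proof is therefore a one-line citation.

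\begin{proof}
The equivalence (1) $\Leftrightarrow$ (3) is Theorem \ref{thm:class_characterization}, and the equivalence (1) $\Leftrightarrow$ (2) follows from Lemma \ref{lemma:same_classes} together with the definitions of $A_\infty^\text{weak}$ and $\A_\infty^\text{weak}$.
\end{proof}
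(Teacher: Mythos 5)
Your proposal matches the paper exactly: the theorem is stated as the ``expansion of Theorem \ref{thm:class_characterization}'' obtained via Lemma \ref{lemma:same_classes}, and your one-line citation is precisely the intended argument. Nothing is missing.
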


\addcontentsline{toc}{section}{References}

\end{document}